\documentclass[11pt]{article}

%\usepackage{xr}
%\externaldocument{main}

\usepackage{comment}

\usepackage[round]{natbib} % omit 'round' option if you prefer square brackets
\bibliographystyle{plainnat}

\usepackage{fullpage}
\usepackage{subcaption}

\usepackage{graphicx}
\usepackage{enumitem}

\usepackage{xcolor}
\usepackage[colorlinks=true,linkcolor=blue,citecolor=blue]{hyperref}

\usepackage{amsthm}
\usepackage{amsmath}
\usepackage{amsfonts}
\usepackage{amssymb}
\usepackage{euscript}
\usepackage{bm}

%NEW COMMANDS
\newcommand{\D}{\,\mathrm{d}} % differentials
\newcommand{\one}{\mathbf{1}}
\renewcommand{\subset}{\subseteq}

\newcommand{\bfx}{\textbf{x}}

\newcommand{\bB}{\mathbf{B}}
\newcommand{\bU}{\mathbf{U}}

\newcommand{\bzero}{\mathbf{0}}

\newcommand{\be}{\mathbf{e}}
\newcommand{\bh}{\mathbf{h}}
\newcommand{\bp}{\mathbf{p}}

\newcommand{\bu}{\mathbf{u}}
\newcommand{\bx}{\mathbf{x}}
\newcommand{\by}{\mathbf{y}}

\newcommand{\bN}{\mathbf{N}}
\newcommand{\bX}{\mathbf{X}}

\newcommand{\cD}{\mathcal{D}}
\newcommand{\cF}{\mathcal{F}}

\newcommand{\cX}{\mathcal{X}}
\newcommand{\cY}{\mathcal{Y}}
\newcommand{\cZ}{\mathcal{Z}}

\newcommand{\sP}{\EuScript{P}}
\newcommand{\sS}{\EuScript{S}}

\newcommand{\sH}{\EuScript{H}}

\newcommand{\bbP}{\mathbb{P}}
\newcommand{\bbQ}{\mathbb{Q}}
\newcommand{\bbE}{\mathbb{E}}

\newcommand{\bbR}{\mathbb{R}}
\newcommand{\ebbR}{\overline{\mathbb{R}}}
\newcommand{\crps}{\operatorname{CRPS}}
%MATH OPERATORS

\DeclareMathOperator*{\argmin}{arg\,min}

%STAT OPERATORS

%\newcommand{\tlinf}{\mathrel{$|$}-\mathrel{$|$}}
%THEOREM ENVIRONMENTS
\theoremstyle{definition}
\newtheorem{definition}{Definition}
\newtheorem{remark}[definition]{Remark}
\newtheorem{example}[definition]{Example}

% change representation of \theassumption changes the numbering style

\theoremstyle{plain}

\newtheorem{theorem}[definition]{Theorem}

%Highlighting

% for tables
\usepackage{multirow}
\usepackage{multicol}

%Commenting
\definecolor{kartik}{rgb}{0.8,0, 0.2}

%new commands

\title{\bfseries Proper scoring rules for estimation\\ and forecast evaluation}
\author{Kartik Waghmare and Johanna Ziegel\thanks{ETH Zurich, Seminar for Statistics, Switzerland, \url{kartik.waghmare@math.ethz.ch,ziegel@stat.math.ethz.ch}}}
\date{}

\begin{document}

\maketitle

\begin{abstract}
    Proper scoring rules have been a subject of growing interest in recent years, not only as tools for evaluation of probabilistic forecasts but also as methods for estimating probability distributions.
    In this article, we review the mathematical foundations of proper scoring rules including general characterization results and important families of scoring rules. We discuss their role in statistics and machine learning for estimation and forecast evaluation. Furthermore, we comment on interesting developments of their usage in applications.
\end{abstract}

% Heading 1
\section{Introduction}

In recent years, proper scoring rules have emerged as a powerful general approach for estimating probability distributions. In addition to significantly expanding the range of modeling techniques that can be applied in practice, this has also substantially broadened the conceptual understanding of estimation methods. Originally, proper scoring rules were conceived in meteorology as summary statistics for describing the performance of probabilistic forecasts \citep{Murphy1984}, but they also play an important role in economics as tools for belief elicitation \citep{Schotter2014}.

A probabilistic forecast is a probability distribution over the space of the possible outcomes of the future event that is stated by the forecaster. The simplest and most popular case of probabilistic forecasts arises when the outcome is binary, so the probabilistic forecast reduces to issuing a predictive probability of success. \citet{brier1950} was the first to consider the problem of devising a scoring rule which could not be ``played'' by a dishonest forecasting agent. 
He introduced the quadratic scoring rule and showed that it incentivizes a forecasting agent to state his most accurate probability estimate when faced with uncertainty.
\citet{mccarthy1956}\footnote{better known as the designer of the Lisp programming language.} was the first to clearly state the concept of propriety as it is understood today and stated as Equation \ref{eq:propriety}. 

Consider a forecasting tournament which consists of a forecasting agent issuing a probabilistic forecast $\bbP$ for a random variable $Y$ and then receiving a penalty $S(\bbP, y)$ according to the forecast $\bbP$ and the realization $y$ of $Y$. If the forecasting agent is rational 
and believes that $Y \sim \bbQ$ for some $\bbQ$, then it would issue the forecast $\bbP$ so as to mimimize the expected value $S(\bbP, \bbQ) = \bbE_{Y \sim \bbQ}[S(\bbP, Y)]$ of its penalty. In order to compel the agent to reveal its true belief $\bbQ$, we must choose $S$ such that $S(\bbP, \bbQ)$ is minimized at $\bbP = \bbQ$. It follows that we must have
\begin{equation}\label{eq:propriety}
    S(\bbQ, \bbQ) \leq S(\bbP, \bbQ)
\end{equation}
for every $\bbP, \bbQ$ (with equality if and only if $\bbP = \bbQ$). In other words, $S$ should be a \emph{(strictly) proper scoring rule}, see Definition \ref{def:1}.

\begin{example}[Brier score]\label{ex:Brier}
For a binary outcome $y \in \{0,1\}$, the \emph{Brier} or \emph{quadratic score}
\[
S_{\mathrm{Brier}}(\bbP,y) = (p - y)^2,
\]
where $p$ is the predicted probability for the event $\{y=1\}$, is a strictly proper scoring rule \citep{brier1950}. 
\end{example}

\begin{example}[Logarithmic score]\label{ex:logarithmic}
   The \emph{logarithmic score}  is given by
    \begin{equation*}
        S_{\mathrm{log}}(\bbP, y) = -\log p(y),
    \end{equation*}
    where $p$ is the density of $\bbP$. It is also known as the ignorance score in meteorology and is a strictly proper scoring rule \citep{Good1952}, see Example \ref{ex:logarithmic2} for details. 
\end{example}

\begin{example}[Continuous ranked probability score (CRPS)]\label{ex:CRPS}
For a real-valued outcome $y \in \bbR$, the \emph{continuous ranked probability score (CRPS)} is defined as
\[
\crps(\bbP,y) = \int \left(F_\bbP(x) - \one\{y \le x\}\right)^2 \D x,
\]
where $F_\bbP$ is the cumulative distribution function (CDF) of $\bbP$ \citep{MathesonWinkler1976}. It is a strictly proper scoring rule. See Example \ref{ex:CRPS2} for details. 
\end{example}

During the 1970s, proper scoring rules were developed into a principled approach for the assessment of probabilistic forecasts with applications in meteorology, economics and psychology \citep{Murphy1970} and as a foundation for the theory of subjective probability \citep{Savage1971}. \citet{mccarthy1956} produced a general characterization of proper scoring rules for finite outcome spaces.
His characterization was later extended by \citet{Hendrickson1971} to absolutely continuous measures on the real line, which was followed by a complete characterization by \citet{gneiting2007} for general measurable spaces, see Theorem \ref{thm:gneiting-char}. 

Proper scoring rules have long been recognized as elicitation and aggregation mechanisms for subjective opinion in economics and other social sciences \citep{Jensen1973}. For example, the spherical scoring originates in psychology  \citep{Roby1965}. \citet{Tetlock2005} provides a popular account of studies on the accuracy of probabilistic forecasts concerning political events by subject-matter experts. 
Another interesting application is the crowd-sourcing of forecasts from multiple human subjects via forecasting tournaments such as the Good Judgement Open and Metaculus, which evaluate forecasts using the Brier and logarithmic scores, respectively \citep{GJOpenFAQ, MetaculusScoresFAQ}. A fascinating analysis of the literature on proper scoring rules in applications from 1950 to 2015 is given by \citet{Carvalho2016}.

The extant literature contains both of the possible conventions for proper scoring rules: one which requires $S(\bbP,\bbQ)$ to be minimized at $\bbP=\bbQ$ and the other requiring it to be maximized instead. The influential paper by \citet{gneiting2007} assumes that better forecasts correspond to higher scoring rule values, but most of the more recent literature has adopted the convention in Equation \ref{eq:propriety}. With a negative orientation, scoring rules should be called loss functions, but it is not in our power to change the naming convention. 

Some parts of the literature order the arguments of proper scoring rules differently, that is, it would be $S(y,\bbP)$ instead of $S(\bbP,y)$. We mention this since it may cause confusion when dealing with expected scores $S(\bbP,\bbQ)$. 
``Scoring rule'' is sometimes abbreviated to ``score'', which may cause confusion, since ``score'' typically refers to the gradient of the logarithmic density in statistics, machine learning and econometrics. However, for some classes of proper scoring rules such as kernel scores (Section \ref{sec:kernel}) the term has now become established.

\subsection{Proper scoring rules in estimation}\label{sec:psr-estimation}

Proper scoring rules present a general and nuanced approach to the problem of estimating probability distributions, which allows for taking into account computational and statistical considerations relevant to the application.

If we think of $\bbQ$ as the data distribution and $\bbP$ as a model distribution, then Equation \ref{eq:propriety} can be interpreted as saying that the expectation $S(\cdot, \bbQ) = \bbE_{Y \sim \bbQ}[S(\cdot, Y)]$ is minimized precisely when the model and data distributions are equal. 
It is reasonable to expect that given an iid sample $\smash{\{Y_{j}\}_{j=1}^{n}}$ from $\bbQ$ with empirical distribution $\hat{\bbQ}_n$, the minimizer of the empirical mean $S(\cdot, \hat{\bbQ}_n) = (1/n) \sum_{j=1}^{n} S(\cdot, Y_{j})$ would give reasonably good estimates of $\bbQ$ under appropriate regularity conditions. Scoring rule minimization can thus be employed for estimating probability distributions.

For example, let $\{\bbP_{\theta}\}_{\theta \in \Theta}$ be a parametric family of distributions and assume that $\bbQ = \bbP_{\theta_\ast}$ for some $\theta_{\ast} \in \Theta$. Consider the estimator
\begin{equation*}
    \hat{\theta} = \argmin_{\theta \in \Theta} \left[ \frac{1}{n} \sum_{j=1}^{n} S(\bbP_{\theta}, Y_{j}) \right] = \argmin_{\theta \in \Theta} S(\bbP_\theta,\hat{\bbQ}_n),
\end{equation*}
where $S$ is a strictly proper scoring rule. This is known as \emph{minimum scoring rule inference}, of which the classical maximum likelihood inference is a special case where $S=S_{\mathrm{log}}$ is the logarithmic score of Example \ref{ex:logarithmic} \citep{dawid2016}. Strict propriety ensures that if $\hat{\theta}$ was minimizing the limit of $S(\bbP_\theta,\hat{\bbQ}_n)$ instead, which is the expectation $S(\bbP_{\theta}, \bbP_{\theta^{\ast}})=S(\bbP_{\theta}, \bbQ)$, then $\hat{\theta} = \theta_{\ast}$. Propriety can thus be thought of as the population counterpart of consistency for M-estimators of probability distributions \citep{Huber2009}.

Minimization of strictly proper scoring rules can also be used to estimate conditional distributions. Let $\{(X_{j}, Y_{j})\}_{j=1}^{n}$ be an iid sample of covariates and responses with $(X_j,Y_j) \in \cX \times \cY$, and let $\{\bbP_{\theta, x}: (\theta, x) \in \Theta \times \cX\} $ be a parametric family of distributions on $\cY$ that model the conditional distributions $\bbP_{Y|X = x}$ of the response $Y$ given covariate $X=x$. We consider
\begin{equation}\label{eq:regression}
    \hat{\theta} = \argmin_{\theta \in \Theta} \left[ \frac{1}{n} \sum_{j=1}^{n} S(\bbP_{\theta, X_{j}}, Y_{j}) \right]
\end{equation}
which yields the estimator $\smash{\bbP_{\hat{\theta}, x}}$ of $\bbP_{Y|X = x}$ for every $x \in \cX$. This approach estimates the conditional distribution simultaneously for every quantile and covariate, unlike some classical methods such as quantile regression.
In this sense, strictly proper scoring rules play the same role for estimating conditional distributions $\bbP_{Y| X= x}$, that the squared error loss plays for the conditional expectation $\bbE[Y|X = x]$. Moreover, in a certain sense, these are the only loss functions which can consistently estimate conditional distributions (see Theorem \ref{thm:regression}).

Although every proper scoring rule can identify the true model or the conditional distribution as its minimum in the population sense, they exhibit different inductive biases or regularization effects in finite samples. Furthermore, the computational tractability varies substantially depending on the chosen scoring rule, see Section \ref{sec:applications}.

\subsection{Proper scoring rules in forecast evaluation}

\citet{GneitingKatzfuss2014} have argued that forecasts should be probabilistic, since future outcomes are uncertain and this uncertainty ought to be quantified for better decision making. Probabilistic forecasts are routinely issued in more and more application domains including but not limited to weather and climate prediction, flood risk prediction, renewable energy forecasts, financial risk management, epidemiological prediction, and preventive medicine. Given competing forecasters or forecasting methods, the predictive power is compared with a proper scoring rule $S$. Suppose that there is data $(\bbP_i,\bbP'_i,y_i)_{i=1}^n$ of competing probabilistic forecasts $\bbP_i$, $\bbP_i'$ and corresponding verifying observations $y_i$ for each forecasting instance $i$ (for example, each day for daily predictions). Then, one compares the average realized scores
\begin{equation}\label{eq:empirical}
\frac{1}{n}\sum_{i=1}^n S(\bbP_i,y_i) \quad \text{and} \quad \frac{1}{n}\sum_{i=1}^n S(\bbP_i',y_i),
\end{equation}
and gives preference to the forecaster with the lower score. Justification of this approach, even in the absence of stationarity of forecast-observation pairs, can be found in \citet{ModesteDombryETAL2023}. Inference methods for differences in predictive performance are discussed by \citet{DieboldMariano1995,GiacominiWhite2006,LaiGrossETAL2011,HenziZiegel2022,ChoeRamdas2024} but this list is not comprehensive. The mentioned works focus on assessing predictive performance comparatively between two forecasters based on proper scoring rules. A comprehensive review of inference methods for assessing predictive performance in relative and absolute terms goes beyond the scope of this paper.

Clearly, the empirical scores in Equation \ref{eq:empirical} can be also be used as an evaluation metric (validation or testing loss) for comparing the performance of different estimates.

While Equation \ref{eq:propriety} ensures that the optimal forecaster (or true model) will dominate all other forecasters (models or parameters) with regards to the comparison in Equation \ref{eq:empirical} in the long term, different proper scoring rules will rank competing imperfect forecasters differently even for infinite amounts of data, see for example \citet{Patton2020}. While this may seem as a disadvantage of ranking forecasts with respect to proper scoring rules, it is not surprising and there there is no alternative. Therefore, the scoring rule must be chosen carefully for the comparison of empirical scores to be relevant to the application at hand, since it determines when one imperfect probabilistic forecast should be preferred over another. As a result, there are numerous proposals to tailor proper scoring rules such that they are discriminative with respect to certain regions of interest of the predictive distributions and outcomes, see Section \ref{sec:applications}.

\subsection{Structure of the paper}

In Section \ref{sec:properscoringrules}, we formally introduce proper scoring rules, discuss characterizations and interpret their geometric properties. Section \ref{sec:families} considers the most important families of proper scoring rules such as kernel scores, local scoring rules and $f$-scores. Scoring rule decompositions are briefly reviewed in Section \ref{sec:decomposition}. The use of proper scoring rules in estimation and forecast evaluation is discussed in Section \ref{sec:applications}, where we consider aspects of computational tractability and the choice of scoring rules for evaluation. The paper concludes with Section \ref{sec:extensions}, where we mention some limitations and extensions of proper scoring rules. All proofs of formal statements in the paper can be found in the appendix.

\section{Proper Scoring Rules}\label{sec:properscoringrules}

Let $\cY$ be a set, $\cF$ be a $\sigma$-algebra on $\cY$ and $\sP$ denote a convex set of probability measures on the measure space $(\cY, \cF)$. Let $\ebbR=\bbR \cup \{-\infty, +\infty\}$ be the extended real line. 
We refer to $\cY$ as \emph{outcome space}. When $\cY = \bbR^{d}$ for some $d \geq 1$, we assume that $\cF$ is the Borel $\sigma$-algebra.

\subsection{Definition} 

A \emph{scoring rule} is a function 
\[
S: \sP \times \cY \to \overline{\bbR}, \quad (\bbP,y) \mapsto S(\bbP,y)
\]
such that $y \mapsto S(\bbP, y)$ is $\sP$-quasi-integrable for every $\bbP \in \sP$, that is, the integral $S(\bbP,\bbQ) = \int S(\bbP,y) \D \bbQ(y)$ exists for all $\bbP,\bbQ \in \sP$ but is not necessarily finite. 
Since we are interested in minimizing $S(\bbP, \bbQ)$ over $\bbP$, multiplication by a positive constant or addition of a function of $y$ is inconsequential. Thus, two scoring rules $S_{1}$ and $S_{2}$ are said to be \emph{equivalent} if for all $y \in \cY$ and $\bbP \in \sP$, $S_{1}(\bbP, y) = \alpha S_{2}(\bbP, y) + l(y)$ for some $\alpha > 0$ and $\sP$-integrable $l: \cY \to \overline{\bbR}$, and \emph{strongly equivalent} if, in addition, $\alpha = 1$. 

\begin{definition}[Proper Scoring Rules]\label{def:1}
    A scoring rule $S: \sP \times \cY \to \overline{\bbR}$ is called \emph{proper} if 
    \begin{equation}\label{eqn:propriety}
        S(\bbQ, \bbQ) \leq S(\bbP, \bbQ)
    \end{equation}
    for every $\bbP, \bbQ \in \sP$ and \emph{strictly proper} if Equation \ref{eqn:propriety} holds with equality if and only if $\bbP = \bbQ$. 
\end{definition}

We have defined (strict) propriety of a scoring rule with regard to all measures in $\sP$. Since there could be cases when $S$ is a scoring rule naturally defined on $\sP \times \cY$ but it is only proper when restricted to some subdomain $\sP' \times \cY \subset \sP \times \cY$, it is important that the domain of a proper scoring rule is clearly defined. \citet{gneiting2007} account for this by defining propriety \emph{relative to some $\sP' \subset \sP$.} For reasons of conciseness, we prefer the formulation in Definition \ref{def:1}. 
 
\begin{example}[Logarithmic score]\label{ex:logarithmic2}
    Let $\mu$ be a $\sigma$-finite measure on $\cY$ and $\sP$ the set of measures which are absolutely continuous with respect to $\mu$. For example, if $\cY = \bbR^d$ and $\mu$ is the Lebesgue measure, we consider absolutely continuous distributions on $\bbR^d$; if $\cY$ is finite and $\mu$ is the counting measure, we have categorical random variables. The \emph{logarithmic score} $S_{\mathrm{log}}: \sP \times \cY \to \overline{\bbR}$ with 
   $S_{\mathrm{log}}(\bbP, y) = -\log p(y)$, where $p$ is the density of $\bbP$ with respect to $\mu$, is a strictly proper scoring rule \citep{Good1952}. The logarithmic score is an example of a \emph{local scoring rule}, see Section \ref{sec:local} for further details. Minimizing the logarithmic score is equivalent to the well-known maximum likelihood principle, which is the most popular method for estimating probability distributions and arguably, the foundation of classical statistical inference. 
\end{example}

\begin{example}[Hyv\"{a}rinen score]\label{ex:hyvarinnen}
    Let $\cY = \bbR^{d}$ (equipped with the Euclidean norm $\|\cdot\|$) and $\sP$ be the set of absolutely continuous distributions
    with twice continuously differentiable densities $p$ such that $\|\nabla_{\bx}\log p(\bx)\| \to 0$ as $\|\bx\| \to \infty$. The \emph{Hyv\"{a}rinen score} $S_{\mathrm{Hyv\ddot{a}rinen}}: \sP \times \cY \to \overline{\bbR}$ given by 
    \begin{equation*}
        S_{\mathrm{Hyv\ddot{a}rinen}}(\bbP, \by) = \Delta_{\by}\log p(\by) + \frac{1}{2}\|\nabla_{\by} \log p(\by)\|^{2},
    \end{equation*}
    where $\nabla_{\by}f$ and $\Delta_{\by}f$ denote the gradient $(\partial f/\partial y_{1}, \dots, \partial f/\partial y_{d})$ and Laplacian $\sum_{j=1}^{d} \partial^{2}f/\partial y_{j}^{2}$ of $f: \bbR^{d} \mapsto \bbR$ with respect to $\by = (y_{1}, \dots, y_{d})$, and 
    $p$ is the density of $\bbP$, is a strictly proper scoring rule \citep{parry2012}. Minimizing the Hyv\"{a}rinen score is equivalent to \emph{score matching}, which is the basis of the score-based modeling techniques behind many of the recent advances in machine learning. 
\end{example}

\begin{example}[Continuous ranked probability score (CRPS)]\label{ex:CRPS2}
Let $\cY = \bbR$ and $\sP$ be the set of distributions with finite mean. The \emph{continuous ranked probability score} $\crps:\sP \to \overline{\bbR}$  given by $\crps(\bbP,y) = \int \left(F_\bbP(x) - \one\{y \le x\}\right)^2 \D x$,
where $F_\bbP$ is the CDF of $\bbP$, is a strictly proper scoring rule \citep{MathesonWinkler1976}. It has a compelling interpretation as the $L^{2}$ distance between the CDFs of $\bbP$ and the forecast $y$. Alternatively, it can also be seen as the Brier score for the binary event $\one{\{Y \leq x\}}$ integrated over $\bbR$. Furthermore, it has the equivalent representation
\begin{equation}\label{eq:CRPS_kernel}
\crps(\bbP,y) = \int |x - y| \D\bbP(x) -\frac{1}{2}\iint |x - x'| \D\bbP(x)\D\bbP(x'),
\end{equation}
see for example \citet[Lemma 2.1]{BaringhausFranz2004}. The representation in Equation \ref{eq:CRPS_kernel} identifies the CRPS as a \emph{kernel score}, see Section \ref{sec:kernel}. It is of great significance in meteorology, where it is the most popular scoring rule for real-valued variables.
\end{example}

While these examples consider rather standard outcome domains $\cY$, proper scoring rules have also been extended and applied to exotic domains such as distributions on the circle \citep{gneiting2007}, sphere \citep{takasu2018}, general Riemannian manifolds \citep{mardia2016}, functional data or stochastic processes \citep{Hayati2024}, point processes \citep{brehmer2024} and discrete or continuous trajectories \citep{bonnier2024}.

With the goal of forecast evaluation in mind, the notion of propriety at Equation \ref{eqn:propriety} is easily motivated, compare Equation \ref{eq:empirical}. However, the notion of propriety also appears naturally from the perspective of (distributional) regression, compare Equation \ref{eq:regression}. The following result has previously appeared in \citet{Tsyplakov2011,HolzmannEulert2014}. It is analogous to a result of \citet{banerjee2005} on conditional expectation and Bregman loss functions.

\begin{theorem}\label{thm:regression}
    Let $\cY$ be a Polish space and $\cX$ be some measurable covariate space. A scoring rule $S: \sP \times \cY \to \overline{\bbR}$ is (strictly) proper if and only if for every pair of random variables $(X,Y) \in \cX \times \cY$ such that the conditional distributions $\bbP_{Y |X=x}$ are in $\sP$, and every family $\{\bbP^{x}: x \in \cX\} \subseteq \sP$ \textit{\color{red} such that the map $(x, y) \mapsto S(\bbP^{x}, y)$ is measurable}, the expectation $\bbE \left[ S(\bbP^{X}, Y) \right]$ is minimized  if (and only if) $\bbP^{x} = \bbP_{Y | X = x}$ for $x \in \cX$ $\bbP_{X}$-almost surely, where $\bbP_X$ is the marginal distribution of $X$.
\end{theorem}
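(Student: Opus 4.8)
The plan is to prove both implications by disintegrating the expectation $\bbE[S(\bbP^X, Y)]$ over $X$ and thereby reducing the conditional statement to the pointwise inequality \eqref{eqn:propriety}.

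\textbf{Propriety implies the conditional minimization property.} Since $\cY$ is Polish, a regular conditional distribution $x \mapsto \bbP_{Y|X=x}$ exists, and by hypothesis it lies in $\sP$ for $\bbP_X$-almost every $x$. For any competing family $\{\bbP^x\}_{x \in \cX} \subseteq \sP$ (measurable in $x$ so that the integrals below are defined), conditioning on $X$ and using that the forecast $\bbP^X$ is $\sigma(X)$-measurable gives
\begin{equation*}
\bbE[S(\bbP^X, Y)] = \bbE\big[\bbE[S(\bbP^X, Y)\mid X]\big] = \int_{\cX} S(\bbP^x, \bbP_{Y|X=x})\,\D\bbP_X(x).
\end{equation*}
Propriety yields $S(\bbP^x, \bbP_{Y|X=x}) \ge S(\bbP_{Y|X=x}, \bbP_{Y|X=x})$ for $\bbP_X$-almost every $x$, so the integral, and hence $\bbE[S(\bbP^X, Y)]$, is minimized by taking $\bbP^x = \bbP_{Y|X=x}$. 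If in addition $S$ is strictly proper, then a minimizing family must make the two integrands agree $\bbP_X$-almost everywhere, and strict propriety applied pointwise turns this into $\bbP^x = \bbP_{Y|X=x}$ for $\bbP_X$-almost every $x$; this gives the ``only if'' refinement.

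\textbf{The conditional minimization property implies propriety.} Here I specialize to a degenerate covariate: let $\cX$ be a one-point space (equivalently, take $X$ independent of $Y$), so that $\bbP_{Y|X=x} = \bbQ$ for the prescribed $\bbQ \in \sP$, and let the competing family be the constant $\bbP^x \equiv \bbP$ for an arbitrary $\bbP \in \sP$. The identity above collapses to $\bbE[S(\bbP^X, Y)] = S(\bbP, \bbQ)$, and the assumed minimization property says exactly that this is minimized at $\bbP = \bbQ$, i.e.\ $S(\bbQ,\bbQ) \le S(\bbP,\bbQ)$. For the strict case, if $S(\bbP,\bbQ) = S(\bbQ,\bbQ)$ then $\bbP^x \equiv \bbP$ is also a minimizer, so the ``only if'' clause forces $\bbP = \bbQ$.

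The conceptual content is thus light; the work is measure-theoretic. The main obstacle is to make the disintegration rigorous: one must guarantee the regular conditional distribution exists and takes values in $\sP$ (this is where the Polish assumption on $\cY$ and the hypothesis $\bbP_{Y|X=x} \in \sP$ enter), verify that $x \mapsto S(\bbP^x, \bbP_{Y|X=x})$ is $\bbP_X$-quasi-integrable so that the outer integral and the tower-property step are legitimate even when $S$ takes values in $\ebbR$, and state precisely in what sense the competing family $\{\bbP^x\}$ is required to be measurable in $x$. One should also be slightly careful with degenerate cases where the optimal value is $-\infty$, which is where some mild integrability is implicitly needed for the strict ``only if'' direction.
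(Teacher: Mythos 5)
Your argument is the same as the paper's: disintegrate $\bbE[S(\bbP^X,Y)]$ via the tower property into $\int_\cX S(\bbP^x,\bbP_{Y|X=x})\,\D\bbP_X(x)$, apply (strict) propriety pointwise, and for the converse take $X$ degenerate. The paper states this more tersely (via Fubini) and does not dwell on the measurability and quasi-integrability caveats you flag, but those are reasonable points of care rather than a different route.
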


\begin{remark}
    Note that for the map $(x, y) \mapsto S(\bbP^{x}, y)$ to be measurable, it suffices that (a) the map $x \mapsto \bbP^{x}(A)$ is measurable, that is, $x \mapsto \bbP^{x}$ corresponds to a Markov kernel, and (b) the map $(\bbP, y) \mapsto S(\bbP, y)$ is measurable for every $y \in \cX$ in the Borel $\sigma$-algebra induced by the weak topology on $\sP$. Furthermore, if $\sP$ is Polish and the map $\bbP \mapsto S(\bbP, y)$ is continuous in the weak topology on $\sP$ for every $y \in \cX$ then together with the fact that $y \mapsto S(\bbP, y)$ is measurable for every $\bbP \in \sP$ (by definition), we can use Carath\'{e}odory's theorem \cite[see][Lemma 4.51]{AliprantisBorder2006}  to conclude that $(\bbP, y) \mapsto S(\bbP, y)$ is measurable (seems like a demanding requirement, it is probably better to require continuity in $y$).
\end{remark}

Mathematically, the concept of proper scoring rules is an extension of the concept of Bregman loss functions to probability distributions  \citep{Ovcharov2018}. These are distance-like functions on Euclidean spaces used in the optimization literature to adapt general methods such as proximal gradient descent to the geometry of specific problems \citep{Teboulle2018}. Importantly, many boosting algorithms such as \emph{AdaBoost} are known to be instances of one such method known as mirror descent \citep{Collins2002, Bejabattais2023}.

To every scoring rule $S: \sP \times \cY \to \overline{\bbR}$, we associate an \emph{entropy} $H: \sP \to \overline{\bbR}$ and a \emph{divergence} $d: \sP \times \sP \to \overline{\bbR}$ given by
\begin{equation}\label{eqn:entropy-div}
    H(\bbP) = S(\bbP, \bbP) = \int S(\bbP, y) \D\bbP(y) \qquad \mbox{ and } \qquad d(\bbP, \bbQ) = S(\bbP, \bbQ) - H(\bbQ).
\end{equation}
For a (strictly) proper scoring rule, the entropy $H$ is a (strictly) concave function on $\sP$, since, by propriety for all $\lambda \in (0,1)$,
\begin{multline*}
    H(\lambda \bbP + (1-\lambda)\bbQ) 
    = \lambda S(\lambda \bbP + (1-\lambda)\bbQ, \bbP) + (1-\lambda) S(\lambda \bbP + (1-\lambda)\bbQ,\bbQ) \\
    \ge \lambda S(\bbP,\bbP) + (1-\lambda) S(\bbQ,\bbQ) = \lambda H(\bbP) + (1-\lambda) H(\bbQ).
\end{multline*}

The divergence satisfies $d(\bbP, \bbQ) \geq 0$ for $\bbP, \bbQ \in \sP$ with equality if (and only if) $\bbP = \bbQ$. The divergence is generally not a metric on $\sP$ and, in fact, it does not even need to be symmetric in its arguments. However, for the important class of kernel scores, which includes the CRPS, symmetry holds and the square root of the divergence is a metric, see Example \ref{ex:CRPS_div} and Section \ref{sec:kernel}.

\begin{example}[Logarithmic score]\label{ex:log_div}
    The logarithmic score has entropy 
    \[H(\bbP) = -\int p(x) \log p(x) \D \mu(x),\] which is the Shannon entropy. The associated divergence $d(\bbP,\bbQ)  = \int q(y)\log (q(y)/p(y)) \D \mu(y) = D_{\mathrm{KL}}(\bbQ || \bbP)$ is the Kullback-Leibler divergence.
\end{example}

\begin{example}[Hyv\"{a}rinen score]\label{ex:div_hyv}
The Hyv\"arinen score has entropy 
\[H(\bbP) = -\tfrac{1}{2}\int \|\nabla\log p(\bx)\|^{2}\,p(\bx)\D\bx,\]
see Remark 1 in the appendix for a derivation.
The associated divergence $d(\bbP, \bbQ) = \tfrac{1}{2}\int \| \nabla \log p(\by) - \nabla \log q(\by) \|^{2}\, q(\by)\D\by$ is sometimes called score matching distance.   
\end{example}

\begin{example}[CRPS]\label{ex:CRPS_div}
The CRPS has entropy \[H(\bbP) = \int F_\bbP(x)(1-F_\bbP(x)) \D x\] and a symmetric divergence $d(\bbP,\bbQ) = \int (F_\bbP(y)-F_\bbQ(y))^2 \D y$, whose square root is a metric, compare Example \ref{ex:energy-scores}.
\end{example}

An alternative popular approach to comparing probability distributions is provided by the family of $\phi$-divergences (also known as $f$-divergences) such as the Hellinger and total variation distances. It should be noted that with the sole exception of the Kullback-Leibler divergence, these do not correspond to proper scoring rules \citep{Csiszar1991}, although they have been shown to be intimately related \citep{Mohamed2017,Gao2020}.

Proper scoring rules have been studied from functional analysis \citep{Ovcharov2015}, convex analysis \citep{Williamson2023} and information geometry \citep{dawid2007} perspectives.

\subsection{Characterization} 

Proper scoring rules can be elegantly characterized in terms of their entropy under minimal assumptions. 
A scoring rule $S: \sP \times \cY \to \overline{\bbR}$ is called \emph{regular} if $H(\bbP) = S(\bbP, \bbP)$ is finite and $S(\bbP, \bbQ) > -\infty$ for every $\bbP, \bbQ \in \sP$ with $\bbP \neq \bbQ$. 

\begin{theorem}[\citealp{gneiting2007}]\label{thm:gneiting-char}
    A regular scoring rule $S: \sP \times \cY \to \overline{\bbR}$ is (strictly) proper if and only if there is a (strictly) concave function $H: \sP \to \bbR$ such that
    \begin{equation}\label{eqn:gneiting-char}
        S(\bbP, y) = H(\bbP) + \langle h_{\bbP}, \delta_{y} - \bbP \rangle = H(\bbP) + h_{\bbP}(y) - \int h_{\bbP}(y) \D \bbP (y)
    \end{equation}
    for every $\bbP \in \sP$ and $y \in \cY$, where $h_{\bbP}$ is a supergradient of $H$ at $\bbP$.
\end{theorem}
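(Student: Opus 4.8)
\emph{Proof strategy.} The plan is to prove both implications, and in each direction the whole argument hinges on a single observation: for a proper scoring rule the section $y \mapsto S(\bbP,y)$ is itself (up to an additive constant) a supergradient of the entropy $H$ at $\bbP$. Once this is recognised, both directions collapse to rearranging definitions, and the genuine work is convex-analytic bookkeeping: one fixes the ambient vector space of finite signed measures spanned by $\sP - \sP$, equips it with the duality $\langle h, \mu\rangle = \int h \D\mu$, and calls $h$ a supergradient of the concave $H$ at $\bbP$ if $H(\bbQ) \le H(\bbP) + \langle h, \bbQ - \bbP\rangle$ for all $\bbQ \in \sP$.

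\emph{The ``if'' direction.} Assume $S$ has the form \eqref{eqn:gneiting-char} for a concave $H: \sP \to \bbR$ with supergradients $h_\bbP$; note that $h_\bbP$ differs from the $\sP$-quasi-integrable function $S(\bbP,\cdot)$ by a constant, so all integrals below exist in $\overline\bbR$. Integrating \eqref{eqn:gneiting-char} against $\bbQ$ gives $S(\bbP,\bbQ) = H(\bbP) + \langle h_\bbP, \bbQ - \bbP\rangle$, and the choice $\bbQ = \bbP$ gives $S(\bbP,\bbP) = H(\bbP)$. The supergradient inequality then reads $S(\bbQ,\bbQ) = H(\bbQ) \le H(\bbP) + \langle h_\bbP, \bbQ - \bbP\rangle = S(\bbP,\bbQ)$, which is exactly \eqref{eqn:propriety}. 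For the strict case I would first record the elementary lemma that if $H$ is strictly concave then every supergradient inequality is strict off the base point, i.e.\ $H(\bbQ) < H(\bbP) + \langle h_\bbP, \bbQ - \bbP\rangle$ for $\bbQ \neq \bbP$: evaluate the supergradient inequality at the mixture $(1-\lambda)\bbP + \lambda\bbQ$, simplify the affine term, and compare with the strict concavity inequality at that mixture to reach a contradiction if equality held at $\bbQ$. Feeding this into the chain above yields strict propriety.

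\emph{The ``only if'' direction.} Assume $S$ is regular and (strictly) proper and set $H(\bbP) := S(\bbP,\bbP)$, which is finite by regularity and (strictly) concave by the mixture computation already displayed in the text preceding the theorem. Take $h_\bbP := S(\bbP,\cdot)$, which is $\sP$-quasi-integrable because $S$ is a scoring rule. Then \eqref{eqn:gneiting-char} holds trivially, since its right-hand side is $H(\bbP) + S(\bbP,y) - \int S(\bbP,\cdot)\D\bbP = H(\bbP) + S(\bbP,y) - H(\bbP) = S(\bbP,y)$; so the only thing left is to check that $h_\bbP$ really is a supergradient of $H$ at $\bbP$. But for any $\bbQ \in \sP$, $H(\bbP) + \langle h_\bbP, \bbQ - \bbP\rangle = H(\bbP) + S(\bbP,\bbQ) - H(\bbP) = S(\bbP,\bbQ) \ge S(\bbQ,\bbQ) = H(\bbQ)$ by propriety, which is precisely the supergradient inequality; strict propriety makes it strict for $\bbQ \neq \bbP$, consistent with strict concavity.

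\emph{Main obstacle.} There is no single deep step; the difficulty is entirely in making the convex analysis rigorous at this level of generality and with $\overline\bbR$-valued quantities. One must verify that every pairing $\langle h_\bbP, \bbQ - \bbP\rangle$ is well defined as an element of $\overline\bbR$ under mere quasi-integrability, and check that the regularity hypothesis --- $H$ real-valued and $S(\bbP,\bbQ) > -\infty$ for $\bbP \neq \bbQ$ --- is exactly what keeps $H$ an honest concave function and prevents $h_\bbP$ from degenerating into a meaningless $-\infty$-valued affine minorant. The only place calling for an actual argument rather than a manipulation of definitions is the lemma that strict concavity upgrades supergradient inequalities to strict ones; everything else is the tautological identification $h_\bbP = S(\bbP,\cdot)$ together with propriety read as a supergradient inequality.
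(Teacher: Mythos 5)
Your proof is correct and is the standard argument; the paper itself does not reprove this theorem but cites \citet{gneiting2007}, whose proof proceeds by exactly the same observation you make --- that $h_{\bbP} = S(\bbP,\cdot)$ is a supergradient of $H(\bbP) = S(\bbP,\bbP)$, and that propriety is precisely the supergradient inequality read backwards. The one lemma you flag (strict concavity forces the supergradient inequality to be strict away from the base point) is indeed the only non-tautological step, and your sketch of it --- evaluating the supergradient inequality at the mixture $(1-\lambda)\bbP + \lambda\bbQ$ and contradicting strict concavity --- works. The integrability caveat you raise in the last paragraph is real but benign: under the paper's regularity hypothesis $S(\bbP,\cdot)$ is $\bbP$-integrable and $\bbQ$-quasi-integrable with $\int S(\bbP,\cdot)\D\bbQ > -\infty$, so the pairing $\langle h_\bbP,\bbQ-\bbP\rangle$ is a well-defined element of $(-\infty,+\infty]$ and the supergradient inequality is meaningful even when $S(\bbP,\cdot)$ fails to be $\bbQ$-integrable in the strict sense the paper's definition nominally requires.
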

In Equation \ref{eqn:gneiting-char}, we use the shorthand $\langle f,\bbP\rangle$ for the integral $\int f \D\bbP$ of a $\bbP$-integrable function $f$ on $\cY$, and $\delta_y$ denotes the Dirac measure at the point $y$. Furthermore, a supergradient of a concave function $H\colon \sP \to \overline{\bbR}$ at $\bbP$ is a function $h_\bbP\colon \cY \to \overline{\bbR}$ that is $\bbQ$-integrable and
\begin{equation}\label{eq:supergradient}
H(\bbP) + \langle h_\bbP, \bbQ - \bbP\rangle \ge H(\bbQ)
\end{equation}
for all $\bbQ \in \sP$. The function $H$ in Equation \ref{eqn:gneiting-char} indeed coincides with the entropy of $S$ defined in Equation \ref{eqn:entropy-div}. A regular proper scoring rule is thus determined by its entropy $H$ and the family $\{h_{\bbP}: \bbP \in \sP\}$ of chosen supergradients. Theorem \ref{thm:gneiting-char} is general but also somewhat abstract since supergradients of concave functions on convex sets of probability measures may not be directly intuitive. However, in the following, we will showcase the strength and direct applicability of this result in several instances. 

Theorem \ref{thm:gneiting-char} simplifies if the outcome space $\cY=\{y_1,\dots,y_n\}$ is finite with $n$ elements. Then, the probability measures $\bbP$ on $\cY$ can be identified with vectors $\bp = (p_{j})_{j=1}^{n}$ in the unit simplex $\sS_n = \{(p_{j})_{j=1}^{n} \in [0, 1]^{n} : \sum_{j=1}^{n} p_{j} = 1\}$, where $p_{j} = \bbP(\{y_j\})$. Theorem \ref{thm:gneiting-char} then states that regular proper scoring rules are all of the form
\begin{equation*}
        S(\bp, y_j) = H(\bp) + \langle h_{\bp}, \be_{j} - \bp \rangle,
\end{equation*}
where $H:\sS_n \to \bbR$ is a concave function with supergradient $h_{\bp} \in \bbR^n$ at $\bp$, $\be_{j} = (\delta_{ij})_{i=1}^{n}$, and $\langle\cdot,\cdot\rangle$ is the usual scalar product on $\bbR^n$. This characerization of proper scoring rules for discrete outcome spaces goes back to \citet{Savage1971} and \citet{mccarthy1956}. If $n=2$, the outcome is binary and the concave function $H$ can be defined on the unit interval $[0,1]$. Using a Choquet representation of univariate concave functions, the representation theorem of \citet{Schervish1989} for proper scoring rules for binary outcomes can be derived, compare \citet[Theorem 3]{gneiting2007}. 

Typically, the entropy $H$ is differentiable in the sense that for every $\bbP \in \sP$, there exists a $\sP$-integrable function $\nabla_{\bbP}H: \cY \to \overline{\bbR}$ such that
\begin{equation}\label{eq:diff_entropy}
    \lim_{\alpha \to 0^+} \frac{1}{\alpha}\Big[H((1-\alpha)\bbP + \alpha \bbQ) - H(\bbP)\Big] = \langle \nabla_\bbP H, \bbQ \rangle
\end{equation}
for every $\bbQ \in \sP$, and $\nabla_{\bbP}H$ is called the gradient of $H$. If $H$ is differentiable, then supergradients are unique and equal to the gradient  $\nabla_{\bbP}H = h_\bbP$ \citep[see][Proposition 5.3]{Ekeland1999}. Thus, for differentiable entropies, there is a bijective correspondence between a scoring rule and its entropy and Equation \eqref{eqn:gneiting-char} provides a generic recipe for constructing proper scoring rules out of differentiable concave functions on $\sP$. 

\subsection{Geometric Properties}\label{sec:geometry}

Generally, the divergence of a proper scoring rule does not correspond to a metric but behaves like a (squared) metric in many respects. Locally speaking, the divergence can be said to impose a manifold structure on the space of probability measures with the Hessian $\nabla_{\bbP}^{2}H$ of the entropy acting as the metric tensor by relating the ``distance" $d(\bbP, \bbP + \Delta\bbP)$ between two infinitesimally close points $\bbP, \bbQ \in \sP$ to the difference $\Delta \bbP = \bbQ - \bbP$ between their ``coordinates" $\bbP$ and $\bbP + \Delta \bbP$ as
\begin{equation}\label{eqn:local-metric}
    d(\bbP, \bbP + \Delta\bbP) \approx -\frac{1}{4} \langle \Delta\bbP, [\nabla_{\bbP}^{2}H] \Delta\bbP \rangle = -\frac{1}{4} \int \nabla_{\bbP}^{2}H(x,x')\D\Delta\bbP(x)\D\Delta\bbP(x'),
\end{equation}
see Theorem \ref{thm:div-hessian} for a precise statement. Thus, the divergence acts like a (squared) metric \emph{locally} between close enough points, with the metric depending on the points via the Hessian $\nabla_{\bbP}^{2}H$. This allows us to compare the local behavior of different scoring rules and has also been used to devise new scoring rules \citep[see][]{Bolin2023}. 

\begin{theorem}\label{thm:div-hessian}
    Let $H: \sP \to \overline{\bbR}$ be a twice-differentiable concave function and let $d: \sP \times \sP \to \overline{\bbR}$ denote the divergence of the corresponding scoring rule. Then 
    \begin{equation*}
        d(\bbP, \bbQ) = -\frac{1}{2} \iint \left[ \int_{0}^{1} t\nabla_{t\bbP + (1-t)\bbQ}^{2}H(x,x') \D t \right] \D(\bbQ - \bbP)(x)\D(\bbQ - \bbP)(x') 
    \end{equation*}
    for every $\bbP, \bbQ \in \sP$.
\end{theorem}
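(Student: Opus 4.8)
\emph{Proof plan.} The strategy is to recognize $d$ as a Bregman-type divergence of the concave entropy $H$ and then apply the one-dimensional Taylor formula with integral remainder along the segment joining $\bbQ$ to $\bbP$. Since $H$ is differentiable, its supergradient at each $\bbP$ is unique and equal to $\nabla_\bbP H$, so by Theorem~\ref{thm:gneiting-char} the scoring rule associated with $H$ is $S(\bbP,y)=H(\bbP)+\langle\nabla_\bbP H,\delta_y-\bbP\rangle$; integrating this against $\bbQ$ and subtracting $H(\bbQ)$ gives
\begin{equation*}
    d(\bbP,\bbQ)=H(\bbP)-H(\bbQ)+\langle\nabla_\bbP H,\bbQ-\bbP\rangle .
\end{equation*}
Thus the identity to be proved is just the integral-remainder form of this (concave) Bregman divergence.

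Fix $\bbP,\bbQ\in\sP$, set $m_t=t\bbP+(1-t)\bbQ$ for $t\in[0,1]$, and let $\phi(t)=H(m_t)$, so that $\phi(1)=H(\bbP)$ and $\phi(0)=H(\bbQ)$. The central step is to show, from twice-differentiability of $H$ along segments of $\sP$, that $\phi\in C^2([0,1])$ with
\begin{equation*}
    \phi'(t)=\langle\nabla_{m_t}H,\bbP-\bbQ\rangle,\qquad
    \phi''(t)=\iint\nabla_{m_t}^2 H(x,x')\,\D(\bbP-\bbQ)(x)\,\D(\bbP-\bbQ)(x').
\end{equation*}
The formula for $\phi'$ follows directly from \eqref{eq:diff_entropy}, since $m_{t+\varepsilon}=m_t+\varepsilon(\bbP-\bbQ)$ and, $\bbP-\bbQ$ having total mass zero, the pairing $\langle\nabla_{m_t}H,\bbP-\bbQ\rangle$ is insensitive to the additive-constant ambiguity in $\nabla_{m_t}H$ (the same ambiguity that is already harmless in the Gneiting representation of $S$). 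The formula for $\phi''$ amounts to differentiating the gradient map $t\mapsto\nabla_{m_t}H$ under the integral sign; this is where the second-order smoothness of $H$ enters and where the Hessian kernel $\nabla^2_{m_t}H$ appears, as the derivative of the gradient.

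Granting this, Taylor's theorem with the integral form of the remainder, expanded at $t=1$ and evaluated at $t=0$, gives $\phi(0)=\phi(1)-\phi'(1)+\int_0^1 t\,\phi''(t)\,\D t$. Since $\phi'(1)=\langle\nabla_\bbP H,\bbP-\bbQ\rangle=-\langle\nabla_\bbP H,\bbQ-\bbP\rangle$, comparing with the Bregman expression for $d$ yields $d(\bbP,\bbQ)=-\int_0^1 t\,\phi''(t)\,\D t$. Substituting the formula for $\phi''$, replacing $\bbP-\bbQ$ by $\bbQ-\bbP$ (it occurs twice), and using Fubini to move $\int_0^1 t\,(\cdot)\,\D t$ inside the double integral then produces the asserted identity, up to reconciling the leading constant with the sign-and-scale convention adopted for $\nabla^2_\bbP H$ in Section~\ref{sec:geometry} (the same convention that underlies \eqref{eqn:local-metric}).

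I expect the main obstacle to be the second part of the middle step: converting the abstract hypothesis that $H$ is twice differentiable into the concrete statement that $\phi$ is $C^2$ with $\phi''$ represented by the Hessian kernel paired twice against $\bbP-\bbQ$. This requires justifying the interchange of differentiation and integration for the gradient map along the segment — hence some uniform-integrability or dominated-convergence control on its difference quotients — and it is essentially the only place where the second-order differentiability is genuinely used. The remaining ingredients, namely the elementary Taylor identity, the cancellation of the sign of $\bbP-\bbQ$, and Fubini's theorem, are routine, as is the final constant bookkeeping.
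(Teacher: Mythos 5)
Your proof takes essentially the same route as the paper: the paper sets $h(t) = H(t\bbQ + (1-t)\bbP)$, performs a first-order Taylor expansion around $t=0$ with integral remainder, and evaluates at $t=1$, which is your $\phi$ up to the substitution $t \mapsto 1-t$. Like you, the paper leaves the justification for differentiating the gradient map under the integral implicit, and your flag about reconciling the leading constant is well taken — the sign and factor conventions used in \eqref{eq:hessian} and \eqref{eqn:local-metric} require care to match the stated formula.
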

In other words, evaluating the divergence $d(\bbP, \bbQ)$ is a matter of integrating the Hessian over mixtures of $\bbP$ and $\bbQ$. This allows us to compare divergences of proper scoring rules to metrics which are more intuitive but less convenient in estimation. 

\begin{example}
Let $\bbP, \bbQ$ be absolutely continuous distributions on $\bbR^d$ with Lebesgue densities $p$ and $q$, respectively. If $\bbP$ and $\bbQ$ are ``close'', the divergence $d(\bbP, \bbQ)$ with respect to the logarithmic score, quadratic score (see Equation \ref{eq:Brier} in Section \ref{sec:f-scores}), and CRPS is approximately given by the right-hand side of Equation \ref{eqn:local-metric}, which is, respectively,
\begin{equation*}
    \frac{1}{4}\int (q(\bx)/p(\bx) - 1)^{2} p(\bfx)\D \bx, \; \frac{1}{2}\int (q(\bx) - p(\bx))^{2}  \D \bx, \; -\frac{1}{8}\iint |x-y|(q(\bx) - p(\bx))(q(\by) - p(\by))\D \bx\D \by.
\end{equation*}
\end{example}

The divergence also satisfies a generalization of the law of cosines (and the Pythagorean theorem). Indeed, the so-called three point property holds, which means that for $\bbP_{1}, \bbP_{2}, \bbP_{3} \in \sP$,
 \begin{equation*}
        d(\bbP_{1}, \bbP_{3}) = d(\bbP_{1}, \bbP_{2}) + d(\bbP_{2}, \bbP_{3}) - \langle \nabla_{\bbP_{2}}H - \nabla_{\bbP_{1}} H, \bbP_{3} - \bbP_{2}\rangle.
    \end{equation*}
 In particular, $d(\bbP_{1}, \bbP_{3}) = d(\bbP_{1}, \bbP_{2}) + d(\bbP_{2}, \bbP_{3})$ if and only if $(\bbP_{1}, \bbP_{2})$ and $(\bbP_{2}, \bbP_{3})$ are geodesically orthogonal, that is $\langle \nabla_{\bbP_{2}}H - \nabla_{\bbP_{1}} H, \bbP_{3} - \bbP_{2}\rangle = 0$.     
The geometry of proper scoring rules (like Bregman loss funcions) is said to be that of doubly flat manifolds, a class of Riemannian manifolds which captures the notion of ``flatness'' as exemplified by affine subspaces of Euclidean spaces \citep{dawid2007, amari2016}.

\section{Families of Scoring Rules}\label{sec:families}

\subsection{Kernel scores}\label{sec:kernel}

Kernel scores form a flexible and powerful class of proper scoring rules that generalize many familiar examples. They are based on reproducing kernel Hilbert space (RKHS) methods and offer a principled way to compare probability distributions using kernel-based embeddings. In what follows, we detail their construction and computational aspects.

\subsubsection{Construction and computation}
Kernel scores are a far-reaching generalization of the CRPS introduced in Example \ref{ex:CRPS}. They are derived from entropies of the form
\begin{equation}\label{eq:kernel_entropy}
H(\bbP) = \frac{1}{2}\iint h(x, x') \D\bbP(x) \D\bbP(x')
\end{equation}
for some conditionally negative definite kernel $h: \cY \times \cY \to [0,\infty)$, and have been popularized by \citet{dawid2007,gneiting2007}. 
A function $h: \cY \times \cY \to \bbR$ is a \emph{conditionally negative definite kernel}, if it is symmetric, that is, $h(x, y) = h(y, x)$ for $x, y \in \cY$, and for every $n \geq 1$, $x_1,\dots,x_n \in \cY$ and $\alpha_1,\dots,\alpha_n \in \bbR$ with $\sum_{j=1}^{n} \alpha_{j} = 0$, it satisfies
$\sum_{i, j=1}^{n} \alpha_{i}\alpha_{j} h(x_{i},  x_{j}) \leq 0$. Moreover, we assume that $h(x, y) \geq 0$ for $x, y \in \cY$. This does not lead to any loss of generality, see Remark 2 in the appendix.

Kernel scores are essentially squared distances between the kernel mean embeddings of the measures into some Hilbert space \citep{SteinwartZiegel2021}. They are closely related to maximum mean discrepancies (MMD), which were originally devised as test statistics for two-sample testing \citep{gretton2006}. Furthermore, they are intimately connected to energy statistics \citep{szekely2013}. The relationship between energy statistics and MMDs is carefully studied in \citet{SejdinovicSriperumbudurETAL2013}, see also \citet{lyons2013}.

For any conditionally negative definite kernel $h$, the entropy at Equation \ref{eq:kernel_entropy} is concave, and hence the induced scoring rule is proper. Strict propriety is a more delicate issue. Assume $h \ge 0$ and let $\sP_h$ be the set of all probability measures such that $H(\bbP) < \infty$. The kernel $h$ is \emph{strongly conditionally negative definite} if $\iint h(x, y) \D(\bbP-\bbQ)(x) \D(\bbP-\bbQ)(y) = 0$ implies $\bbP = \bbQ$ for any $\bbP, \bbQ \in \sP_{h}$. Strongly conditionally negative definite kernels are closely related to positive definite characteristic and universal kernels, see \citet{gretton2006,sriperumbudur2011}. 

\begin{theorem}\label{thm:kernel-score}
       If $h: \cY \times \cY \to [0,\infty)$ is a (strongly) conditionally negative definite kernel, then the scoring rule $S: \sP_h \times \cY \to \overline{\bbR}$ given by 
    \begin{equation}\label{eqn:kernel-score}
         S(\bbP, y) =  \int h(y, x) \D\bbP(x) -\frac{1}{2}\iint h(x, x') \D\bbP(x)\D\bbP(x')
    \end{equation}
    is (strictly) proper.
\end{theorem}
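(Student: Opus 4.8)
The plan is to compute the entropy and divergence of the candidate scoring rule \eqref{eqn:kernel-score} explicitly and thereby reduce (strict) propriety to the (strong) conditional negative definiteness of $h$. First one checks that $y \mapsto S(\bbP,y)$ is $\sP_h$-quasi-integrable: since $h \ge 0$, the map $y \mapsto \int h(y,x)\,\D\bbP(x)$ is nonnegative and measurable, so
\[
S(\bbP,\bbQ) = \iint h(x,y)\,\D\bbP(x)\,\D\bbQ(y) - \tfrac12\iint h(x,x')\,\D\bbP(x)\,\D\bbP(x')
\]
is well defined in $(-\infty,+\infty]$ for every $\bbP,\bbQ\in\sP_h$, the subtracted term being finite by the definition of $\sP_h$; hence $S$ is a (regular) scoring rule on $\sP_h$. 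Taking $\bbQ=\bbP$ shows its entropy \eqref{eqn:entropy-div} is exactly the kernel entropy $H(\bbP)=\tfrac12\iint h\,\D\bbP\,\D\bbP$ of \eqref{eq:kernel_entropy}, and a short computation using the symmetry of $h$ gives, whenever $\iint h\,\D\bbP\,\D\bbQ<\infty$,
\[
d(\bbP,\bbQ) = S(\bbP,\bbQ)-H(\bbQ) = -\tfrac12 \iint h(x,x')\,\D(\bbP-\bbQ)(x)\,\D(\bbP-\bbQ)(x'),
\]
while $d(\bbP,\bbQ)=+\infty$ if $\iint h\,\D\bbP\,\D\bbQ=\infty$ (a case in which necessarily $\bbP\neq\bbQ$). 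Since $d(\bbP,\bbQ)=S(\bbP,\bbQ)-S(\bbQ,\bbQ)$, (strict) propriety is thus equivalent to
\[
\iint h(x,x')\,\D(\bbP-\bbQ)(x)\,\D(\bbP-\bbQ)(x') \;\le\; 0 \qquad\text{for all }\bbP,\bbQ\in\sP_h,
\]
this inequality being additionally strict whenever $\bbP\neq\bbQ$ in the strongly conditionally negative definite case. (Alternatively, the same computation shows $h_{\bbP}(y)=\int h(y,x)\,\D\bbP(x)$ puts \eqref{eqn:kernel-score} into the form \eqref{eqn:gneiting-char}, so by Theorem \ref{thm:gneiting-char} it suffices to verify $h_{\bbP}$ is a supergradient of $H$ at $\bbP$ — which is again exactly the displayed inequality.)

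The heart of the argument is therefore to upgrade the defining property of a conditionally negative definite kernel — which a priori concerns only finitely supported signed measures $\sum_j\alpha_j\delta_{x_j}$ with $\sum_j\alpha_j=0$ — to the signed measure $\nu=\bbP-\bbQ$, which has total mass $\nu(\cY)=0$ and, in the relevant case, satisfies $\iint h\,\D|\nu|\,\D|\nu|<\infty$. I would do this by approximation: choosing a refining sequence of finite measurable partitions $\{A_i\}$ of $\cY$ with points $x_i\in A_i$, one forms the discrete signed measures $\nu_N=\sum_i\nu(A_i)\,\delta_{x_i}$, which satisfy $\nu_N(\cY)=0$ and hence $\iint h\,\D\nu_N\,\D\nu_N=\sum_{i,j}\nu(A_i)\nu(A_j)\,h(x_i,x_j)\le 0$ by definition, and then passes to the limit $\iint h\,\D\nu_N\,\D\nu_N\to\iint h\,\D\nu\,\D\nu$. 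For the strong case one uses in addition that the limit is strictly negative unless the quadratic form $\iint h\,\D\nu\,\D\nu$ already vanishes, which by the definition of a strongly conditionally negative definite kernel forces $\bbP=\bbQ$. Equivalently, one may invoke the Hilbert-space picture: via a fixed base point a conditionally negative definite $h$ yields a positive definite kernel $K$ with $\iint h\,\D\nu\,\D\nu=-2\iint K\,\D\nu\,\D\nu\le 0$, so that $d(\bbP,\bbQ)=\|\mu_K(\bbP)-\mu_K(\bbQ)\|_{\cH}^2$ for the kernel mean embedding $\mu_K$ into the associated reproducing kernel Hilbert space $\cH$; or one may use Schoenberg's theorem together with the positive definiteness of the bounded kernels $e^{-th}$, $t>0$.

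I expect the main obstacle to be the measure-theoretic bookkeeping in this limiting step rather than any conceptual difficulty: one must ensure the relevant double integrals are finite on $\sP_h$ (so the expansion of $d$ is legitimate and Tonelli/Fubini apply), and justify $\iint h\,\D\nu_N\,\D\nu_N\to\iint h\,\D\nu\,\D\nu$. This is straightforward when $h$ is continuous or bounded — in particular for the kernels $e^{-th}$, where dominated convergence via $1-e^{-th}\le th$ also disposes of the non-finiteness issue — but in the general measurable case it needs an extra approximation of $h$ by nicer kernels (or a mild regularity assumption on $\cY$). Once the displayed inequality is established, the statement of Theorem \ref{thm:kernel-score} follows immediately in both the proper and the strictly proper cases.
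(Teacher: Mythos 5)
Your proposal is correct and follows essentially the same route as the paper's proof. Both reduce (strict) propriety to the nonpositivity of the quadratic form $\iint h\,\D(\bbP-\bbQ)\,\D(\bbP-\bbQ)$ — you do this directly via the explicit divergence formula, while the paper phrases it as (strict) concavity of the entropy $H$ and then invokes Theorem \ref{thm:gneiting-char}, which is the same computation and also your parenthetical alternative — and both upgrade the finitely-supported c.n.d.\ inequality to general $\bbP-\bbQ$ by approximation with discrete measures, a step the paper leaves equally informal ("taking $\bbP,\bbQ$ to be discrete measures and using approximation arguments"). The one point where the paper is a bit tighter: it cites Sejdinovic et al.\ (Proposition 20, Remark 21) to show that $\iint h\,\D\bbP\,\D\bbQ$ is automatically finite for $\bbP,\bbQ\in\sP_h$, so the branch $d(\bbP,\bbQ)=+\infty$ that you allow for never actually occurs; your treatment of that branch is harmless but the citation would let you drop it. Your Hilbert-space alternative (writing $h(x,y)=\|\psi_x-\psi_y\|^2+f(x)+f(y)$ so that the quadratic form becomes $-2\|\int\psi\,\D(\bbP-\bbQ)\|^2$) is in fact the cleanest way to make the limiting step rigorous, and is exactly the decomposition the paper uses in its proof of Theorem \ref{thm:kernel-scores-char}.
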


A \emph{kernel score} is a (strictly) proper scoring rule $S: \sP_h \times \cY \to \overline{\bbR}$ of the form in Equation \ref{eqn:kernel-score} for some (strongly) conditionally negative definite kernel $h: \cY \times \cY \to [0,\infty)$. 
The divergence $d$ of a kernel score is $d(\bbP, \bbQ) 
    = -\frac{1}{2} \iint h(y, y') \D(\bbP - \bbQ)(y) \D(\bbP - \bbQ)(y').$ 
We can write $S(\bbP, y) = d(\bbP, \delta_{y}) + \tfrac{1}{2}h(y, y)$ since $\delta_{y} \in \sP_h$ for $y \in \cY$. In fact, $(\bbP, y) \mapsto d(\bbP, y)$ is itself a proper scoring rule and is strongly equivalent to $S$. The intimate connection between conditionally negative definite kernels and the geometry of Hilbert spaces results in many interesting interpretations of the kernel score that are best stated in terms of its divergence. 

\begin{theorem}[Kernel Scores]\label{thm:kernel-scores-char}
    The function $S: \sP \times \cY \to \overline{\bbR}$ as in Equation \ref{eqn:kernel-score} is a (strictly) proper scoring rule if and only if there exists a Hilbert space $\sH$ and a subset $\{\psi_{x}\}_{x \in \cY} \subset \sH$ such that the divergence $d$ of $S$ satisfies
    \begin{equation}\label{eqn:kernel-score-embedding}
       d(\bbP, \bbQ) = -\frac{1}{2} \iint \|\psi_{x} - \psi_{y}\|_{\sH}^{2} \D(\bbP - \bbQ)(x) \D(\bbP - \bbQ)(y) = \left\|\int \psi_x \D (\bbP - \bbQ)(x)\right\|_{\sH}^2,
        \end{equation}
        (and that the mapping $\bbP \mapsto \int \psi_{x} \D\bbP(x)$ defined for $\bbP$ with $\int \|\psi_{x}\|^{2} \D\bbP(x) < \infty$ is injective).
\end{theorem}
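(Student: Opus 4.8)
The proof runs entirely through the divergence. From \eqref{eqn:entropy-div} and \eqref{eqn:kernel-score}, for any symmetric $h \ge 0$ the entropy is $H(\bbP) = \tfrac12\iint h(x,x')\D\bbP(x)\D\bbP(x')$, and therefore
\[
d(\bbP,\bbQ) = S(\bbP,\bbQ) - H(\bbQ) = -\tfrac12\iint h(x,y)\D(\bbP-\bbQ)(x)\D(\bbP-\bbQ)(y),
\]
so that $S$ is proper exactly when $d \ge 0$ on $\sP_h$, and strictly proper exactly when moreover $d(\bbP,\bbQ) = 0$ forces $\bbP = \bbQ$. The ``if'' direction is then immediate: given such $\sH$ and $\{\psi_x\}$, $d(\bbP,\bbQ) = \bigl\|\int \psi_x\D(\bbP-\bbQ)(x)\bigr\|_\sH^2 \ge 0$, so $S$ is proper; and since $\bbP \mapsto \int \psi_x\D\bbP(x)$ is linear, $d(\bbP,\bbQ) = 0$ iff $\int \psi_x\D\bbP(x) = \int \psi_x\D\bbQ(x)$, whence $d$ is positive definite (equivalently $S$ is strictly proper) precisely when this mean embedding is injective. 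This disposes of the parenthetical statements as well.

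For the ``only if'' direction, assume $S$ is proper, i.e.\ $d \ge 0$. The first step is to recover that $h$ is a conditionally negative definite kernel in the sense of the definition preceding Theorem \ref{thm:kernel-score}: every finitely supported probability measure lies in $\sP_h$ (its entropy is a finite sum of finite nonnegative numbers), and given $x_1,\dots,x_n \in \cY$ and $\alpha_1,\dots,\alpha_n \in \bbR$ with $\sum_j \alpha_j = 0$, splitting the $\alpha_j$ into positive and negative parts writes $\sum_j \alpha_j\delta_{x_j} = c(\bbP-\bbQ)$ for some finitely supported $\bbP,\bbQ \in \sP_h$ and $c \ge 0$, so that $\sum_{i,j}\alpha_i\alpha_j h(x_i,x_j) = -2c^2\, d(\bbP,\bbQ) \le 0$. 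The second step is a Schoenberg-type embedding: the kernel $h'(x,y) := h(x,y) - \tfrac12 h(x,x) - \tfrac12 h(y,y)$ is still conditionally negative definite (the diagonal terms drop out against coefficients summing to zero) and has vanishing diagonal, so by Schoenberg's theorem there is a Hilbert space $\sH$ and a map $\psi\colon\cY \to \sH$ with $h'(x,y) = \|\psi_x - \psi_y\|_\sH^2$, that is,
\[
h(x,y) = \|\psi_x - \psi_y\|_\sH^2 + \tfrac12 h(x,x) + \tfrac12 h(y,y);
\]
concretely $\psi$ is the canonical feature map, furnished by the Moore--Aronszajn theorem, of the positive definite kernel $g(x,y) = \tfrac12\bigl(h(x,x_0) + h(y,x_0) - h(x,y) - h(x_0,x_0)\bigr)$ for an arbitrary fixed $x_0 \in \cY$, and joint measurability of $h$ makes $\psi$ weakly measurable. (For the interplay between conditionally negative definite kernels, Hilbert-space embeddings and maximum mean discrepancies we refer to \citet{SejdinovicSriperumbudurETAL2013}.)

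It remains to substitute this representation of $h$ into the divergence and verify the integrability that legitimizes the computation. From the displayed identity, for $\bbP \in \sP_h$ we have $h(x,y) \ge \tfrac12 h(x,x) + \tfrac12 h(y,y)$, and integrating in $y$ and then in $x$ gives $\int h(x,x)\D\bbP(x) \le 2H(\bbP) < \infty$, hence $\iint \|\psi_x - \psi_y\|_\sH^2\D\bbP(x)\D\bbP(y) = 2H(\bbP) - \int h(x,x)\D\bbP(x) < \infty$; Fubini then yields a point $a$ with $\int \|\psi_x - \psi_a\|_\sH^2\D\bbP(x) < \infty$, so $\int \|\psi_x\|_\sH^2\D\bbP(x) < \infty$ and the Bochner integral $\int \psi_x\D\bbP(x)$ exists. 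Substituting into $d(\bbP,\bbQ) = -\tfrac12\iint h\D(\bbP-\bbQ)\D(\bbP-\bbQ)$, the terms $\tfrac12 h(x,x)$ and $\tfrac12 h(y,y)$ depend on a single variable and integrate to zero against the signed measure $\bbP - \bbQ$ of total mass zero; expanding $\|\psi_x - \psi_y\|_\sH^2 = \|\psi_x\|^2 - 2\langle\psi_x,\psi_y\rangle_\sH + \|\psi_y\|^2$, the single-variable terms drop for the same reason, and one is left with
\[
d(\bbP,\bbQ) = -\tfrac12\iint \|\psi_x - \psi_y\|_\sH^2\D(\bbP-\bbQ)(x)\D(\bbP-\bbQ)(y) = \iint \langle\psi_x,\psi_y\rangle_\sH\D(\bbP-\bbQ)(x)\D(\bbP-\bbQ)(y),
\]
which equals $\bigl\|\int \psi_x\D(\bbP-\bbQ)(x)\bigr\|_\sH^2$. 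This is precisely \eqref{eqn:kernel-score-embedding}, and the strict-propriety claim was already settled in the first paragraph.

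The genuinely classical input---Schoenberg's theorem and Moore--Aronszajn---does the conceptual work, so the two places needing care are, first, upgrading propriety (a statement about probability measures in $\sP_h$) to conditional negative definiteness in the finite, pointwise sense, which is exactly why one needs finitely supported measures to be admissible and the positive/negative-part splitting; and second, the measure-theoretic bookkeeping in the last step---establishing $\int \|\psi_x\|_\sH^2\D\bbP(x) < \infty$ so that the kernel mean embedding is well defined and the interchanges of integration are licit, together with the weak measurability of $\psi$ required for the Bochner integral. I expect this second point to be the main obstacle.
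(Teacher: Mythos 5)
Your proof follows essentially the same route as the paper's: both derive conditional negative definiteness of $h$ from propriety and then invoke a Schoenberg/Berg-type Hilbert-space embedding of conditionally negative definite kernels (the paper cites \citet[Proposition 3.3.2]{berg1984}, you use the vanishing-diagonal version via $h'(x,y)=h(x,y)-\tfrac12 h(x,x)-\tfrac12 h(y,y)$), after which the single-variable correction terms integrate to zero against the zero-mass signed measure $\bbP-\bbQ$. You supply the details the paper leaves implicit --- the finitely-supported-measure argument yielding conditional negative definiteness and the integrability bookkeeping for the Bochner mean embedding --- and these are all correct.
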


We can thus construct a kernel score for a given outcome space $\cY$ as soon as we can construct a conditionally negative definite kernel on $\cY$ or a Hilbertian embedding of $\cY$. A simple approach is to choose $h = -k$ for some strictly positive definite kernel $k: \cY \times \cY \to \bbR$ which can be constructed in many ways \citep{williams2006}. For example, $k(\bx, \by) = -\exp(-\|\bx-\by\|^{p}/\lambda)$ corresponds to the Laplacian ($p=1$) and Gaussian ($p=2$) kernels on $\bbR^d$ leading to bounded kernel scores. Kernel scores implicitly correspond to different embeddings of the outcome space $\cY$ into a Hilbert space via Equation \ref{eqn:kernel-score-embedding}. For example, $(\bx, \by) \mapsto \|\bx - \by\|^{\beta/2}$ for $\beta \in (0, 2)$ is actually a metric on $\cY = \bbR^{d}$. According to a well-known result of \citet{schoenberg1937}, the corresponding metric space can be embedded into a Hilbert space, which implies the existence of an embedding $\{\psi_{\bx}\}_{\bx \in \cY} \subset \sH$ such that $\|\psi_{\bx} - \psi_{\by}\|_{\sH} = \|\bx - \by\|^{\beta/2}$ for $\bx, \by \in \cY$.

\begin{example}[Energy Scores]\label{ex:energy-scores}
    Let $\cY = \bbR^{d}$. For $\beta \in (0, 2)$, the \emph{energy score} $S_{\mathrm{Energy}}: \sP \times \cY \to \overline{\bbR}$ given by
    \begin{equation*}
        S_{\mathrm{Energy}}(\bbP, \by) = \int \| \bx - \by\|^{\beta}\D\bbP(\bx) -\frac{1}{2}\iint\|\bx - \bx'\|^{\beta}\D\bbP(\bx)\bbP(\bx') 
    \end{equation*}
   is strictly proper for $\sP = \{\bbP: \int \|\bx\|^{\beta} \D\bbP(\bx) < \infty\}$. In fact, the same applies if we replace $\|\cdot\|$ with $\|\cdot\|_{\alpha}$ given by $\|\bx\|_{\alpha} = (\sum_{j=1}^{d}x_{j}^{\alpha})^{1/\alpha}$ where $\bx = (x_{j})_{j=1}^{d} \in \bbR^d$, for $\alpha \in (0, 2)$, $\beta \in (0, \alpha]$ and the resulting scores are called \emph{non-Euclidean energy scores} \citep{gneiting2007}. The term energy score is used, since they resemble potential energy functions of central forces in physics \citep{szekely2013}. Unlike kernel scores with Gaussian or Laplacian kernels, energy scores are \emph{homogeneous} or \emph{scale-free}, that is, replacing $\bx$, $\bx'$, and $\by$ by $c\bx$, $c\bx'$, and $c\by$ respectively for some $c > 0$ only scales the score by $c^{\beta}$.
    The most popular kernel score is the CRPS, which is a special case of an energy score with $d = 1$ and $\beta = 1$, compare Examples \ref{ex:CRPS2}, \ref{ex:CRPS_div} and Section \ref{sec:crps}.
\end{example}

\begin{example}[Variogram Score]
Let $\cY = \bbR^{d}$.
    \citet{Scheuerer2015} propose the \emph{variogram score}
    \begin{equation*}
        S(\bbP, \by) = \sum_{i, j =1}^{d} w_{ij} \left(|y_{i} - y_{j}|^{p} - \int |x_{i} - x_{j}|^{p}\D\bbP(\bx) \right)^{2}
    \end{equation*}
    where $w_{ij} \geq 0$, $\bx = (x_{i})_{i=1}^{d}$, $\by = (y_{i})_{i=1}^{d}$, and $p \in (0, \infty)$, defined on $\sP = \{\bbP : \int |x_i|^p \D \bbP(\bx) < \infty, i=1,\dots,d\}$. 
    It is a proper kernel score with conditionally negative definite kernel $h(x,y) = \sum_{i,j=1}^d w_{ij} (|x_i-x_j|^p - |y_i - y_j|^p)^2$, see \citet{allen2023a}. However, it is not strictly proper. 
\end{example}

An advantageous feature of kernel scores is that they can be easily computed for discrete predictive distributions, or, generally, estimated by sampling from $\bbP$. Indeed, for an iid sample $X_1,\dots,X_m$ from $\bbP$,
\begin{equation}\label{eqn:kernel-sampling}
    \hat{S}(\bbP, y) = \frac{1}{m}\sum_{i=1}^{m} h(X_{i}, y) - \frac{1}{2m(m-1)}\sum_{i =1}^{m}\sum_{i': i' \neq i} h(X_{i}, X_{i'})
\end{equation}
is an unbiased estimate of $\hat{S}(\bbP, y)$. This provides an affordable alternative to numerical integration when a closed-form expression for $S(\bbP, y)$ is not available.

\subsubsection{Characterizations}

The divergence of a proper scoring rule is generally not symmetric, although locally it behaves like a squared metric, as discussed in Section \ref{sec:geometry}. Interestingly, the square root of the divergence of a kernel score is always a valid metric on $\sP$. In fact, under mild conditions, these are the only proper scoring rules with this property. 

\begin{theorem}\label{thm:sym-char}
    Let $S: \sP \times \cY \to \overline{\bbR}$ be a regular proper scoring rule such that $\{\delta_{y}: y \in \cY\} \subseteq \sP$ and the supergradient map $\bbP \mapsto h_{\bbP}$ is weakly continuous. Then, the divergence $d$ of $S$ is symmetric if and only if $S$ is strongly equivalent to a kernel score. 
\end{theorem}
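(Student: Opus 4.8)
The plan is to prove both directions, with the ``if'' direction being essentially a consequence of Theorem~\ref{thm:kernel-scores-char} and the ``only if'' direction being the substantive part. For the easy direction, suppose $S$ is strongly equivalent to a kernel score. Then its divergence coincides with the divergence of that kernel score (divergences are invariant under strong equivalence, since adding a $\sP$-integrable function $l(y)$ changes $S(\bbP,\bbQ)$ and $H(\bbQ)$ by the same amount $\langle l,\bbQ\rangle$), and by Theorem~\ref{thm:kernel-scores-char} this divergence has the form $d(\bbP,\bbQ) = \|\int \psi_x\,\D(\bbP-\bbQ)(x)\|_\sH^2$, which is manifestly symmetric. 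So the work is all in the converse.

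For the ``only if'' direction, assume $d$ is symmetric. I would first extract a candidate kernel from the supergradient family. Write $S(\bbP,y) = H(\bbP) + h_\bbP(y) - \langle h_\bbP,\bbP\rangle$ using Theorem~\ref{thm:gneiting-char}, so that $d(\bbP,\bbQ) = H(\bbP) - H(\bbQ) + \langle h_\bbP, \bbQ-\bbP\rangle$. Symmetry $d(\bbP,\bbQ) = d(\bbQ,\bbP)$ then forces
\[
2\big(H(\bbP)-H(\bbQ)\big) = \langle h_\bbP + h_\bbQ,\ \bbP - \bbQ\rangle \quad\text{for all }\bbP,\bbQ\in\sP.
\]
Evaluating this on Dirac measures (which lie in $\sP$ by hypothesis) and on mixtures, I would aim to show that $h_\bbP(y)$ has an affine-in-$\bbP$ structure: concretely, using the weak continuity of $\bbP\mapsto h_\bbP$, I expect to derive that there is a symmetric function $g\colon\cY\times\cY\to\ebbR$ with $h_\bbP(y) = \langle g(y,\cdot),\bbP\rangle + c(y)$ for some function $c$, modulo $\bbP$-integrable terms. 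The idea is that the symmetry relation is a cocycle-type identity whose solutions are constrained to be ``quadratic''; differentiating along the path $t\mapsto (1-t)\bbP + t\bbQ$ and using symmetry should turn the first-order condition into one saying the second derivative of $H$ is bilinear with a fixed kernel, i.e.\ $H$ itself is a quadratic form $H(\bbP) = \tfrac12\iint g(x,x')\,\D\bbP(x)\D\bbP(x') + \text{linear}$. Once $H$ has this form, the linear part contributes only a $\sP$-integrable function of $y$ to $S$, so $S$ is strongly equivalent to the scoring rule with entropy $\tfrac12\iint g(x,x')\,\D\bbP\,\D\bbP$; concavity of this $H$ then forces $g$ (up to the sign convention and the reduction in Remark~\ref{rem:wlog-kernel}) to be conditionally negative definite, and strictness carries over in the strongly-conditionally-negative-definite case, completing the identification with a kernel score.

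The main obstacle, I expect, is making the ``symmetry of $d$ implies $H$ is quadratic'' step rigorous at the level of generality of an arbitrary convex set $\sP$ of measures with only weak continuity of the supergradient map --- in particular, justifying the interchange of the limit in the directional-derivative definition \eqref{eq:diff_entropy} with integration, and handling the non-uniqueness of supergradients when $H$ is not differentiable. A clean way around the latter is to note that the symmetry relation above, being symmetric in $\bbP$ and $\bbQ$, pins down $h_\bbP + h_\bbQ$ on differences $\bbP-\bbQ$ and hence pins down $h_\bbP$ up to an additive function of $y$ that does not affect $S$; one can then choose a canonical representative $h_\bbP(y) = \langle g(y,\cdot),\bbP\rangle$ and verify directly that this choice reproduces $S$ via \eqref{eqn:gneiting-char}. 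The Dirac measures are essential here: restricting the identity to $\bbQ=\delta_y$ is what lets us read off the kernel $g(x,y)$ pointwise and is presumably why the hypothesis $\{\delta_y : y\in\cY\}\subseteq\sP$ appears in the statement.
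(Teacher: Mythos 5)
Your plan matches the paper's architecture at the level of ingredients: the same symmetry relation $2(H(\bbP)-H(\bbQ)) = \langle h_\bbP+h_\bbQ,\,\bbP-\bbQ\rangle$, a Dirac-based extraction of a kernel, and concavity of $H$ forcing conditional negative definiteness, with linear terms in $H$ discarded because they leave $S$ unchanged. But the step you flag as ``the main obstacle'' is where essentially all the work lies, and the route you gesture at --- ``restricting the identity to $\bbQ=\delta_y$ is what lets us read off the kernel $g(x,y)$ pointwise'' --- does not, as stated, produce an affine-in-$\bbP$ structure for $h_\bbP$: the term $\langle h_\bbP,\bbP\rangle$ hidden in the symmetry relation is quadratic in $\bbP$, so plugging $\bbQ=\delta_y$ into it directly gives no pointwise readout of a kernel.

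What the paper does instead is differentiate \emph{before} testing against Diracs. Weak continuity plus the symmetry relation show $H$ is Gateaux differentiable with gradient $h_\bbQ$; differentiating the symmetry relation in $\bbQ$ along $\Delta\bbQ=\bbQ'-\bbQ$ yields
\[
\lim_{t\to 0}\tfrac{1}{t}\,\langle h_{\bbQ+t\Delta\bbQ}-h_\bbQ,\,\bbQ-\bbP\rangle = \langle h_\bbQ-h_\bbP,\,\Delta\bbQ\rangle,
\]
and then taking a difference in $\bbP$ and applying the resulting identity \emph{to itself} (passing the limit through a second time) gives the mixed-second-derivative symmetry $\langle h_{\bbP'}-h_\bbP,\,\Delta\bbQ\rangle=\langle h_{\bbQ'}-h_\bbQ,\,\Delta\bbP\rangle$. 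Only this genuinely bilinear identity survives the Dirac expansion: it yields $\langle h_{\bbP'}-h_\bbP,\Delta\bbQ\rangle=\iint K\,\D(\Delta\bbP)\D(\Delta\bbQ)$ with the \emph{symmetrized} kernel $K(x,y)=\tfrac{1}{2}\bigl[h_{\delta_y}(x)+h_{\delta_x}(y)\bigr]$ (note $h_{\delta_y}(x)$ alone need not be symmetric --- the symmetrization is forced, not assumed). From there one solves $h_\bbP(y)=\int K(x,y)\,\D\bbP(x)+C(y)$, back-substitutes into the symmetry relation, and recovers $H(\bbP)=\tfrac{1}{2}\iint K\,\D\bbP\,\D\bbP + \text{linear}$. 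So the scheme is the one you had in mind, but the key technical step is a two-layer Gateaux-differentiation argument that produces a bilinear identity first; a direct pointwise readout from the undifferentiated symmetry relation would not close.
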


The definition of weak continuity of the supergradient map is given in the appendix. Theorem \ref{thm:sym-char} need not be true when $\{\delta_{x}: x \in X\} \not\subseteq \sP$ as exemplified by the quadratic score for Lebesgue densities on $\bbR^d$, see Section \ref{sec:f-scores}. It has divergence $d(\bbP, \bbQ) = \int (p(\by) - q(\by))^{2} \D \by$ but is not a kernel score, see Remark 3 in the appendix.

We focus now on the outcome space $\cY = \bbR^d$. If we do not want to attach special importance to particular regions of $\cY = \bbR^{d}$, the scoring rule should be \emph{translation invariant}, that is, $S(\bbP, \by) = S(\bbP_{\bh}, \by + \bh)$ for $\by, \bh \in \bbR^{d}$, where $\bbP_{\bh}(A) = \bbP(A + \bh)$ for Borel sets $A \subset \bbR^{d}$. Such kernel scores correspond to kernels of the form $h(\bx, \by) \equiv h(\bx - \by)$, $\bx, \by \in \bbR^{d}$, for some conditionally negative definite function $h: \bbR^{d} \to \bbR$. As a consequence of the Levy-Khinchin Theorem \citep[see][Theorem 3.12.12]{sasvari2013}, they admit an elegant representation as the $L^{2}(\mu)$-distance of the characteristic functions, where $\mu$ is the spectral measure. This relates to the results of \citet{SzekelyRizzoETAL2007}. Similar results have independently been shown by \citet{modeste2024} as the characterization of translation invariant MMDs on $\bbR^{d}$.

\begin{theorem}[Spectral Representation of Translation Invariant Kernel Scores] \label{thm:spectral-rep-kernel}
    Let $S: \sP \times \bbR^d \to \overline{\bbR}$ be a translation invariant kernel score. There exists a positive semi-definite matrix $\bB \in \bbR^{d \times d}$, and a $\sigma$-finite measure $\mu$ on $\bbR^{d}$ with $\mu(\{\bzero\}) = 0$ and $\int_{\bbR^{d}} \min(1, \|\bu\|^{2}) \D\mu(\bu) < \infty$ such that
    \begin{alignat*}{4}
         S(\bbP, \by) &= \textstyle \quad &&(\by - \mathbf{m}_{\bbP})^{\top}\bB (\by - \mathbf{m}_{\bbP}) \quad&&+\quad &&\int_{\bbR^{d}} |e^{i\bu^{\top}\by} - f_{\bbP}(\bu)|^{2} \D\mu(\bu) \\
     d(\bbP, \bbQ) &= \textstyle \quad &&(\mathbf{m}_{\bbQ} - \mathbf{m}_{\bbP})^{\top}\bB (\mathbf{m}_{\bbQ} - \mathbf{m}_{\bbP}) \quad&&+\quad &&\int_{\bbR^{d}} |f_{\bbQ}(\bu)-f_{\bbP}(\bu)|^{2} \D\mu(\bu)
    \end{alignat*}
     for every $\by \in \cY$, and $\bbP, \bbQ \in \sP$ with characteristic functions $f_\bbP$, $f_\bbQ$, and means $\mathbf{m}_\bbP$, $\mathbf{m}_\bbQ$, respectively. In particular, if the negative kernel is a positive definite function, then $\mu$ is a finite measure. 
\end{theorem}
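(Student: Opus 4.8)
The plan is to push the translation invariance of $S$ down to its generating kernel, apply the L\'evy--Khinchin representation of negative definite functions on $\bbR^{d}$, and substitute the resulting spectral decomposition into the defining identity \eqref{eqn:kernel-score}; the characteristic functions and means in the statement then drop out automatically.

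First I would reduce to a translation invariant negative definite function of a single variable. Since $S$ is a kernel score it is given by \eqref{eqn:kernel-score} for some conditionally negative definite kernel $h$, and $\{\delta_{\by}\}_{\by\in\bbR^{d}}\subseteq\sP$, so evaluating at a point mass yields $S(\delta_{\bx},\by)=h(\by,\bx)-\tfrac12 h(\bx,\bx)$. Imposing translation invariance at point masses forces, first, $\bx\mapsto h(\bx,\bx)$ to be constant, and then $h(\by+\bh,\bx+\bh)=h(\by,\bx)$ for all $\bx,\by,\bh$, so that $h(\by,\bx)=\psi(\by-\bx)$ with $\psi(\bv):=h(\bv,\bzero)$. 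By symmetry of $h$ the function $\psi\colon\bbR^{d}\to\bbR$ is real, even, and conditionally negative definite, and after the inessential normalization $\psi(\bzero)=0$ (replacing $h$ by the strongly equivalent kernel with vanishing diagonal, cf.\ Remark \ref{rem:wlog-kernel}, which changes $S$ only by an additive constant), $S$ is exactly the kernel score of $\psi$.

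Next I would invoke the L\'evy--Khinchin representation of (continuous) negative definite functions \citep[Theorem 3.12.12]{sasvari2013}. Because $\psi$ is real and even, the linear drift term and the compensating imaginary term in the general representation vanish and the L\'evy measure may be symmetrized, which leaves
\[
\psi(\bu)=\bu^{\top}\bB\bu+\int_{\bbR^{d}}\bigl(1-\cos(\bu^{\top}\mathbf{w})\bigr)\,\D\nu(\mathbf{w}),
\]
with $\bB\in\bbR^{d\times d}$ positive semidefinite and $\nu$ a $\sigma$-finite measure on $\bbR^{d}$ with $\nu(\{\bzero\})=0$ and $\int_{\bbR^{d}}\min(1,\|\mathbf{w}\|^{2})\,\D\nu(\mathbf{w})<\infty$. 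Substituting into \eqref{eqn:kernel-score} and writing $\mathbf{m}_{\bbP}=\int\bx\,\D\bbP(\bx)$, the quadratic part $\bu^{\top}\bB\bu$ telescopes (after expanding at $\bu=\by-\bx$) to exactly $(\by-\mathbf{m}_{\bbP})^{\top}\bB(\by-\mathbf{m}_{\bbP})$; for the L\'evy part I would interchange the $\D\nu$-integration with the $\D\bbP$-integrations (justified by Tonelli, the integrands being nonnegative, together with $\bbP\in\sP$), and for fixed $\mathbf{w}$ use $\int(1-\cos(\mathbf{w}^{\top}(\by-\bx)))\,\D\bbP(\bx)=1-\operatorname{Re}(e^{i\mathbf{w}^{\top}\by}\,\overline{f_{\bbP}(\mathbf{w})})$ and $\iint(1-\cos(\mathbf{w}^{\top}(\bx-\bx')))\,\D\bbP(\bx)\,\D\bbP(\bx')=1-|f_{\bbP}(\mathbf{w})|^{2}$. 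Combining these with the factor $\tfrac12$ in \eqref{eqn:kernel-score} collapses the integrand to $\tfrac12\,|e^{i\mathbf{w}^{\top}\by}-f_{\bbP}(\mathbf{w})|^{2}$, so that with $\mu:=\tfrac12\nu$ one obtains the stated formula for $S$. The divergence formula follows from the same per-$\mathbf{w}$ computation applied to $d(\bbP,\bbQ)=-\tfrac12\iint\psi(\bx-\bx')\,\D(\bbP-\bbQ)(\bx)\,\D(\bbP-\bbQ)(\bx')$, where the ``$1$'' and $|f_{\bbP}|^{2}$ contributions now cancel and only $|f_{\bbP}(\mathbf{w})-f_{\bbQ}(\mathbf{w})|^{2}$ survives. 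Finally, if $-h$ is a positive definite function, Bochner's theorem gives $-h(\bx,\bx')=\int e^{i\mathbf{w}^{\top}(\bx-\bx')}\,\D\rho(\mathbf{w})$ for a finite positive measure $\rho$; subtracting the constant diagonal value puts $\psi$ in the above form with $\bB=0$ and $\nu$ equal to the restriction of $\rho$ to $\bbR^{d}\setminus\{\bzero\}$, which is finite, hence so is $\mu$.

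The step I expect to be fussiest is the reduction: passing rigorously from translation invariance of $S$ to translation invariance of a normalized kernel, including the diagonal-constancy argument and the bookkeeping of the additive constant absorbed by strong equivalence, together with the regularity needed to legitimately invoke L\'evy--Khinchin (one either restricts to kernel scores with continuous kernels, or uses that a measurable, locally bounded conditionally negative definite function is continuous). Once these are in place, the remainder is routine Fubini bookkeeping.
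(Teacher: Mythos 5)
Your proof is correct and follows essentially the same route as the paper: reduce to a L\'evy--Khinchin representation of the translation-invariant conditionally negative definite kernel, substitute into the kernel-score and divergence formulas via Fubini, and invoke Bochner's theorem for the finite-measure case. You supply more detail than the paper's proof—in particular the explicit reduction from translation invariance of $S$ to $h(\bx,\by)=\psi(\bx-\by)$, the per-frequency identity $\int(1-\cos(\bu^\top(\by-\bx)))\D\bbP(\bx)-\tfrac12\iint(1-\cos(\bu^\top(\bx-\bx')))\D\bbP(\bx)\D\bbP(\bx')=\tfrac12|e^{i\bu^\top\by}-f_\bbP(\bu)|^2$, and the fact that the spectral measure appearing in the theorem is one half of the L\'evy measure—but the underlying argument is identical.
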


It follows that different translation invariant kernel scores place different weight on the different frequencies of the density. For example,  the Gaussian and Laplacian kernel scores for $\lambda > 0$, and the energy score for $\beta \in (0, 2)$ in $\bbR^{d}$, correspond to $\bB = \bzero$ and the spectral measures $\mu$ are given by 
\begin{equation*}
    \frac{\mathrm{d}\mu}{\mathrm{d}\bu} \,\,\propto\,\, \exp(-\lambda\|\bu\|^{2}), \quad \Big(1+\lambda^{2}\|\bu\|^{2}\Big)^{-(d+1)/2}, \mbox{ and }  \|\bu\|^{-(d+\beta)},
\end{equation*}
respectively. 

Further important notions of equivariance and invariance are homogeneity and isotropy. A proper scoring rule $S: \sP \times \bbR^{d} \to \overline{\bbR}$ is said to be \emph{homogeneous} of degree $\alpha$ if $S(\bbP_{c}, c\by) = c^{\alpha}S(\bbP, \by)$ for every $c > 0$, $\bbP \in \sP$, and $\by \in \bbR^{d}$ where $\bbP_{c}(A) = \bbP(c^{-1}A)$ for Borel sets $A \subset \bbR^{d}$. It is said to be \emph{isotropic} or \emph{rotation-invariant} if $S(\bbP_{\bU}, \bU\by) = S(\bbP, \by)$ for every rotation matrix $\bU \in \mathrm{SO}(d)$, $\bbP \in \sP$, and $\by \in \bbR^{d}$ where $\bbP_{\bU}(A) = \bbP(\bU^\top A)$ for Borel sets $A \subset \bbR^{d}$. A consequence of homogeneity, that is of great practical importance is that scaling the sample $\by$ by $c > 0$ as $c\by$, scales the predictive distribution accordingly. Thus, using different units of measurement for the data (e.g. km/h and knots for wind speed) leads to the same optimal distribution in the corresponding units (in estimation contexts), or the same predictive performance assessment (in forecast evaluation contexts). The following result characterizes translation invariant kernel scores with these properties.

\begin{theorem}\label{thm:kernel-scores-homogeneous-iso}
    Let $S: \sP \times \bbR^{d} \to \overline{\bbR}$ be a translation invariant kernel score with the matrix $\bB$ and the spectral measure $\mu$ as in Theorem \ref{thm:spectral-rep-kernel}. Then there exists a $\sigma$-finite measure $\rho$ on $(0, \infty)$ with $\int_{0}^{\infty} \min(1, r^{2})\D\rho(r) < \infty$ and probability measures $\{\nu_{r}: r \in (0, \infty)\}$ on the unit sphere $\mathbb{S}^{d-1} \subset \bbR^{d}$ such that
    \begin{equation*}
        \D\mu(r \sigma) = \D\nu_{r}(\sigma) \D\rho(r)
    \end{equation*}
    for $r \in (0, \infty)$ and $\sigma \in \mathbb{S}^{d-1}$.
    \begin{enumerate}
        \item If $S$ is homogeneous of degree $\alpha \in \bbR$, then either (a) $\alpha = 2$ and $\mu$ is zero everywhere, or (b) $\alpha \in (0, 2)$, $\bB = \bzero$ and $\mu$ satisfies
    \begin{equation*}
        \D\mu(r\sigma) = \alpha \mu(\mathbb{S}^{d-1}\times(0,1))\frac{\D\nu(\sigma) \D r}{r^{1+\alpha}} 
    \end{equation*}
    for some probability measure $\nu$ on $\mathbb{S}^{d-1}$ and the Lebesgue measure $\mathrm{d}r$ on $(0, \infty)$.
    
    \item If $S$ is isotropic, then $\bB = c\mathbf{I}$ for some $c \geq 0$ and $\mu$ satisfies
    \begin{equation*}
        \D\mu(r\sigma) = \frac{\Gamma(d/2)}{2\pi^{d/2}}\, \D\sigma\, \D\rho(r)
    \end{equation*}
    where $\mathrm{d}\sigma$ is the (probability) Haar measure on $\mathbb{S}^{d-1}$. 
    \end{enumerate}
    In particular, the CRPS is the only $1$-homogeneous translation invariant kernel score on $\bbR$ and the energy scores are the only homogeneous isotropic translation invariant kernel scores on $\bbR^{d}$ (up to positive multiplicative constants).
\end{theorem}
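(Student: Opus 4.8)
The plan is to translate the homogeneity and isotropy of $S$ into symmetries of the underlying conditionally negative definite function $h$, use the uniqueness of its Lévy–Khinchine representation to push these symmetries onto the pair $(\bB,\mu)$ of Theorem~\ref{thm:spectral-rep-kernel}, and then solve the resulting scaling and rotation equations via a polar-coordinate disintegration of $\mu$. For the reduction: since $S$ is translation invariant its kernel has the form $h(\bx,\by)=h(\bx-\by)$, and by Theorem~\ref{thm:spectral-rep-kernel} we may write $h(\bx)=\bx^{\top}\bB\bx+\int_{\bbR^{d}}(1-\cos(\bu^{\top}\bx))\,\D\mu(\bu)$ (up to a harmless positive factor on $\mu$, taken symmetric), so that $h(\bzero)=0$ and hence, using $S(\bbP,\by)=d(\bbP,\delta_\by)+\tfrac12 h(\by,\by)$ and evaluating $d$ at two point masses, $S(\bbP,\by)=d(\bbP,\delta_\by)$ and $d(\delta_\bx,\delta_\by)=h(\bx-\by)$. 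Substituting $\bbP=\delta_\bx$ into $S(\bbP_c,c\by)=c^{\alpha}S(\bbP,\by)$ gives $h(c\bx)=c^{\alpha}h(\bx)$ for all $\bx\in\bbR^{d}$, $c>0$, and substituting into $S(\bbP_{\bU},\bU\by)=S(\bbP,\by)$ gives $h(\bU\bx)=h(\bx)$ for all $\bU\in\mathrm{SO}(d)$. The displayed representation is exactly the Lévy–Khinchine representation of the continuous c.n.d.\ function $h$; since that representation is unique \citep{sasvari2013}, every symmetry of $h$ forces the corresponding symmetry of $(\bB,\mu)$. Finally, because $\int\min(1,\|\bu\|^{2})\,\D\mu<\infty$, the image of $\mu$ under $\bu\mapsto\|\bu\|$ is a $\sigma$-finite measure $\rho$ on $(0,\infty)$ with $\rho((a,\infty))<\infty$ for every $a>0$, and disintegrating $\mu$ along this map yields probability measures $\{\nu_r\}$ on $\mathbb{S}^{d-1}$ with $\D\mu(r\sigma)=\D\nu_r(\sigma)\,\D\rho(r)$, which is the first display of the theorem.

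For the homogeneous case, pushing $\mu$ forward by $\bu\mapsto c\bu$ replaces the Lévy part of $h$ by the one with measure $c_{\ast}\mu$ and the quadratic part by the one with matrix $c^{2}\bB$, so comparison with the representation of $c^{\alpha}h$ and uniqueness give $c^{2}\bB=c^{\alpha}\bB$ and $c_{\ast}\mu=c^{\alpha}\mu$ for all $c>0$. The first identity forces $\bB=\bzero$ unless $\alpha=2$, in which case the second identity together with $\int_{0}^{1}r^{2}\,\D\rho<\infty$ forces $\mu=\bzero$, so $S$ is the degenerate mean-eliciting score $(\by-\mathbf{m}_{\bbP})^{\top}\bB(\by-\mathbf{m}_{\bbP})$, which we set aside. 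With $\bB=\bzero$, taking the radial image of $c_{\ast}\mu=c^{\alpha}\mu$ gives $G(a/c)=c^{\alpha}G(a)$ for $G(a):=\rho((a,\infty))$, hence $G(a)=G(1)\,a^{-\alpha}$; monotonicity of $G$ forces $\alpha>0$, the constraint $\int_{0}^{1}r^{2}\,\D\rho<\infty$ forces $\alpha<2$, and $\D\rho(r)=\alpha\,G(1)\,r^{-1-\alpha}\,\D r$ with $G(1)$ the mass $\mu$ places outside the unit ball (this yields the stated constant). Taking instead the angular content of $c_{\ast}\mu=c^{\alpha}\mu$ and using essential uniqueness of the disintegration gives $\nu_{r/c}=\nu_r$ for $\rho$-a.e.\ $r$ and every $c>0$; since the scaling action of a countable dense subgroup of $(0,\infty)$ is ergodic, $\nu_r$ equals a fixed probability measure $\nu$ for $\rho$-a.e.\ $r$. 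Combining the radial and angular parts gives the asserted form of $\mu$.

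For the isotropic case, acting by $\bU\in\mathrm{SO}(d)$ sends the Lévy–Khinchine data of $h$ to $(\bU^{\top}\bB\bU,\ \bU_{\ast}\mu)$, so $h(\bU\cdot)=h$ and uniqueness give $\bU^{\top}\bB\bU=\bB$ and $\bU_{\ast}\mu=\mu$ for all $\bU$; the former yields $\bB=c\mathbf{I}$ with $c\ge0$, and the latter that $\mu$ is rotation invariant. Since rotations preserve $\|\cdot\|$ the radial image $\rho$ is unchanged, and essential uniqueness of the disintegration gives $\bU_{\ast}\nu_r=\nu_r$ for $\rho$-a.e.\ $r$ and every $\bU$ in a countable dense subgroup of $\mathrm{SO}(d)$; weak continuity of $\bU\mapsto\bU_{\ast}\nu_r$ upgrades this to all $\bU\in\mathrm{SO}(d)$, so $\nu_r$ is the uniform (Haar) measure for $\rho$-a.e.\ $r$, which is the second display. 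The ``in particular'' statements then follow by specialization: on $\bbR$ with $\alpha=1$ the homogeneous case forces $\bB=\bzero$ and, with $\mu$ symmetric fixing the two-point angular measure, $\D\mu(u)\propto|u|^{-2}\,\D u$, i.e.\ the CRPS up to a positive factor; imposing both homogeneity and isotropy, the two cases together force $\bB=\bzero$ and $\D\mu(r\sigma)\propto r^{-1-\alpha}\,\D r\,\D\sigma$, which in Cartesian coordinates reads $\propto\|\bu\|^{-(d+\alpha)}\,\D\bu$, exactly the spectral measure of the energy score with $\beta=\alpha\in(0,2)$.

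The main obstacle is twofold. First, the passage in the reduction step from a symmetry of the \emph{function} $h$ to a symmetry of its Lévy–Khinchine \emph{data} rests squarely on the uniqueness of that representation, which must be invoked carefully (including the convention that $\mu$ is symmetric). Second, there is measure-theoretic bookkeeping in the last two steps: moving the quantifiers ``for every $c>0$'' and ``for every $\bU\in\mathrm{SO}(d)$'' past statements that only hold for $\rho$-almost every $r$, which requires restricting to a countable dense subgroup and then closing up using weak continuity of the pushforward (for rotations) or ergodicity of the scaling action (for dilations). Everything else reduces to elementary calculus with the radial profile $G$ and to checking the integrability condition $\int_{0}^{1}r^{2}\,\D\rho<\infty$.
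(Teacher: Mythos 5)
Your proposal is correct and follows essentially the same strategy as the paper: translate homogeneity/isotropy of $S$ into symmetries of the translation-invariant kernel $h$, invoke uniqueness of the L\'evy--Khinchine representation to push those symmetries onto $(\bB,\mu)$, disintegrate $\mu$ in polar coordinates, and solve the resulting scaling and rotation identities for $\rho$ and $\nu_r$. Where you differ is in the bookkeeping, and your choices are arguably cleaner. You obtain the functional equations $h(c\bx)=c^{\alpha}h(\bx)$ and $h(\bU\bx)=h(\bx)$ directly by evaluating $S$ at Dirac measures, whereas the paper substitutes into the characteristic-function representation of $S(\bbP,\by)$ and then specializes to $\bbP=\delta_{\mathbf 0}$; the endpoint is the same. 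You solve the scaling equation for $\rho$ via the survival function $G(a)=\rho((a,\infty))$, which is finite for every $a>0$; the paper's proof manipulates $\rho((0,r))$, which for $\alpha\in(0,2)$ is identically $+\infty$ (and gives a decreasing, hence ill-posed, candidate for a distribution function), and correspondingly the normalizing constant $\mu(\mathbb{S}^{d-1}\times(0,1))$ in the statement should read $\mu(\mathbb{S}^{d-1}\times(1,\infty))$. You also explicitly address the $\rho$-a.e.\ caveats inherent in Rokhlin disintegration by passing to a countable dense subgroup and then closing up by weak continuity (rotations) or ergodicity of the dilation action, a point the paper's proof elides by treating $\nu_{cr}=\nu_r$ as a pointwise identity. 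Finally, your observation that $\alpha=2$ forces $\mu=\bzero$ and yields a degenerate, non-strictly-proper mean score is a useful clarification of why $\alpha\in(0,2)$ in the conclusion.
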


\subsubsection{Continuous ranked probability score (CRPS)}\label{sec:crps}

The CRPS is arguably the most popular kernel score for univariate data. It was originally derived as the limit of discrete ranked probability scores and by integrating the Brier score, until it was recognized as a member of a much larger family of kernel scores \citep{Holstein1977, gneiting2007}. Mathematically, it is essentially the only $1$-homogeneous translation invariant kernel score on $\bbR$ (see Theorem \ref{thm:kernel-scores-homogeneous-iso}). In addition to historical reasons, its popularity is explained by a range of advantages that it offers.

The CRPS is an intuitive, interpretable, and convenient measure of forecasting accuracy.
It is distance-sensitive, that is, it assigns better scores to forecasts, which put more probability closer to the observations. In contrast, the logarithmic score assigns harsh penalties regardless of the overall closeness of the forecast to the observations if the forecast density at any of the observations is too low. 
The CRPS is essentially the $L^{2}$ distance between the CDFs of the forecast and the observations (see Example \ref{ex:CRPS_div}), and as the integral of the quadratic (Brier) score for binary outcomes, it elegantly connects to the conceptually simpler point forecasting setting. Indeed, it reduces to the absolute error loss when point forecasts are issued. Moreover, homogeneity implies that scaling the observations scales the predictive distribution accordingly even when the CRPS is away from the minimum.
Furthermore, the CRPS exists for all forecasts with a finite first moment, which means that the forecast measure need not necessarily have a density. It can be easily estimated by Equation \ref{eqn:kernel-sampling} using samples drawn from the forecast, or explicitly computed if the forecast distribution has finite support. This is especially useful in applications such as ensemble forecasting in meteorology, where forecasts are routinely issued as discrete predictive distributions consisting of the empirical distributions of the ensemble of predictions.  

The CRPS has been extended to circular random variables \citep{gneiting2007}. It has also spawned numerous other modifications, see Section \ref{sec:stat-performance}.

Closed-form expressions for the CRPS have been obtained for many important parametric families such as the Gaussian and generalized extreme value (GEV) distributions \citep{Taillardat2016}. Closed-form expressions are also available for forecast CDFs which are linear splines \citep{GasthausBenidisETAL2019}. In the absence of closed-form expressions, one can resort to estimating the score by sampling as in Equation \ref{eqn:kernel-sampling} or numerical integration techniques such as kernel quadrature \citep{Adachi2025}. Directly implementing Equation \ref{eqn:kernel-sampling} for the CRPS leads to an inefficient $\mathcal{O}(n^{2})$ time computation where $n$ is the number of samples drawn from the forecast. This can be improved to almost linear $\mathcal{O}(n\log n)$ time using an algebraically equivalent expression \citep{Kelen2025}:
\begin{equation*}
    \mathrm{CRPS}(\hat{\bbP}, y) = \frac{2}{n(n-1)}\sum_{j=1}^{n}(X_{(j)}-y)\left((n-1)\mathbf{1}_{\{y < X_{(j)}\}} - j + 1\right)
\end{equation*}
where $\hat{\bbP} = \frac{1}{n} \sum_{j=1}^{n} \delta_{X_{j}}$.

The CRPS plays a central role in forecast verification in meteorology. Moreover, with the widespread adoption of weather-related renewable energy sources such as solar and wind, accurate probabilistic forecasts for power, load and price of energy are necessary due to their highly variable nature \citep{Zhang2014, Hong2016, Weron2014}. In these applications, the CRPS is widely used as an evaluation metric and increasingly also for estimation. Furthermore, the CRPS has also been used for parametric \citep{GneitingRafteryETAL2005} and nonparametric \citep{HenziZiegelETAL2021} regression in statistics, and for supervised learning of distributions in machine learning \citep{GasthausBenidisETAL2019}. We discuss the role and use of proper scoring rules in estimation further in Section \ref{sec:applications}. 

\citet{Pic2023} derived the minimax rate of convergence for distributional regression with the CRPS and showed that it is achieved by using $k$-nearest neighbors ($k$-NN) or kernel smoothing. Surprisingly, this rate is of the same order as the minimax optimal rate for point prediction. Theoretical guarantees for estimation, model selection and convex aggregation have been obtained by \citet{Dombry2024}.

\subsection{Local scoring rules}\label{sec:local}

A local scoring rule is a scoring rule whose value $S(\bbP, x)$ depends on $\bbP$ only through the values of its density on an arbitrarily small neighborhood around $x$. Let $\cY \subseteq \bbR^d$ be open and let $\sP_{\mathrm{ac}}^{k}$ denote the set of probability measures on $\cY$ with Lebesgue densities, which are $k$-times differentiable.
 
\begin{definition}
    A \emph{local scoring rule of order $k$} is a proper scoring rule $S: \sP_{\mathrm{ac}}^{k} \times \cY \to \overline{\bbR}$ of the form
    \begin{equation*}
        S(\bbP, \by) = s(\by, p(\by), \nabla_{\by}\, p(\by), \dots, \nabla_{\by}^{k}\, p(\by))
    \end{equation*}
    where $s: \cY \times \bbR \times \cdots \times \bbR^{d^{k}} \to \bbR$, $p$ is the density of $\bbP$ and $\nabla_{\by}^{j}\, p(\by)$ denotes the $j$th derivative of $p$. 
\end{definition}

Arguably, the most popular and important of all proper scoring rules is the logarithmic score given by $S_{\mathrm{log}}(\bbP, \by) = -\log p(\by)$, compare Examples \ref{ex:logarithmic2} and \ref{ex:log_div}. It is a local scoring rule of order $0$, and remarkably, the only differentiable scoring rule of this kind \citep{bernardo1979}. 
Empirical risk minimization for the logarithmic score is equivalent to maximum likelihood estimation which is the foundation of classical parametric inference. 
The unparalleled historical success of maximum likelihood is in great part because it furnishes closed-form estimates for many important parametric families of distributions and is asymptotically optimal under rather mild regularity conditions. 

\begin{remark}
    Theorem \ref{thm:gneiting-char} can be used to derive a simple proof that the logarithmic score is the only differentiable local scoring rule of order $0$. Indeed, let $S(\bbP, \by) = s(\by, p(\by))$ be such a scoring function. The function $s(\by, z)$ is differentiable in $z$, and its entropy is $H(\bbP) = \int s(\bx,p(\bx))p(\bx) \D\bx$. 
    By Equation \eqref{eqn:gneiting-char} and using Equation \eqref{eq:diff_entropy}, we must have
    \begin{equation*}
        S(\bbP, \by) = s(\by, p(\by)) + p(\by) \frac{\partial}{\partial z}s(\by, p(\by)) - \int \frac{\partial }{\partial z}s(\bx, p(\bx)) [p(\bx)]^{2}  \D\bx,
    \end{equation*}
    which is of the form $s(\by, p(\by))$ if and only if the integral is a constant, or equivalently, the integrand
    is a constant multiple of $p(\by)$. It follows that $s(\by, p(\by)) = c \log p(\by) + d$ for some constants $c, d \in \bbR$.
\end{remark}

Even though the logarithmic score is the preferred approach to fitting standard parametric models, it is frequently impractical for fitting non-standard ones because the normalizing constants of such families are often not available in closed form and estimating or approximating them is computationally infeasible. To circumvent this problem, \citet{hyvarinen2005} proposed to fit models using \emph{score matching}, which is to minimize the expected squared distance $\bbE_{\bX \sim p_{\theta}}[\| \nabla_{\bx} \log p_{\theta}(\bX) - \nabla_{\bx} \log q(\bX) \|^{2}]$ between the ``scores'' $\nabla_{\bx} \log p_{\theta}(\bx)$ and $\nabla_{\bx} \log q(\bx)$, where $q$ is the true density. 
Empirically, this corresponds to minimizing the Hyv\"{a}rinen score given by \[S_{\mathrm{Hyv\ddot{a}rinen}}(\bbP, \by) = \Delta_{\by}\log p(\by) + \tfrac{1}{2}\|\nabla_{\by} \log p(\by)\|^{2},\] where $p$ is the density of $\bbP$, see Examples \ref{ex:hyvarinnen} and \ref{ex:div_hyv}. It is a local scoring rule of order $2$. The value of the Hyv\"arinen score does not change if we replace $p$ by $c \cdot p$ for some constant $c > 0$, which implies that the score can be evaluated even with non-normalized $p$.

The Hyv\"{a}rinnen score has also been extended to distributions on $\{-1, 1\}^{d}$, $\bbR_{+}^{n}$ \citep{hyvarinen2007}, spheres \citep{takasu2018} and general Riemannian manifolds \citep{dawidlauritzen2005, mardia2016}. Due to its applicability to non-normalized models, score matching enjoys significant popularity in the machine learning community and has, in fact, played an important role in recent advances in generative modeling, see Section \ref{sec:nonnorm}.  

\citet{Forbes2015} establish the asymptotic normality of parameter estimation using the Hyv\"{a}rinnen score and \citet{Koehler2022} relate the asymptotic efficiency of the estimator to the isoperimetric constant of the distribution. \citet{Ghosh2025} establish minimax rate optimality (up to log factors) of denoising. 

A fairly general characterization of local scoring rules of arbitrary order has been obtained by \citet{parry2012} using a calculus of variations approach. In particular, there are no local scoring rules of odd orders, and all local scoring rules of even order $\ge 2$ can be computed without knowledge of the normalization constant of the density like in the case of the Hyv\"arinen score. A more explicit characterization for the particular case of order $2$ was derived by \citet{ehm2012} using an approach based on Theorem \ref{thm:gneiting-char}. For example, \citet[Example 5.3]{ehm2012} introduce the log-cosh score 
\[
S_{\text{log-cosh}}(\bbP,y) = -\log\cosh z_1 + z_1 \tanh z_1 + z_2 (1-\tanh^2 z_1),
\]
with $z_1 = (\mathrm{d}/\mathrm{d}y)\log p(y) = p'(y)/p(y)$ and $z_2 = (\mathrm{d}^2/\mathrm{d}y^2)\log p(y) = p''(y)/p(y) - (p'(y)/p(y))^2$. It is strictly proper for distributions $\bbP$ on $\bbR$ with densities $p$ that satisfy some regularity conditions \citep[Definition 2.3]{ehm2012}. \citet{parry2016-ext} studied local scoring rules in the multivariate setting. Specifically, he introduced and characterized the so-called extensive scoring rules, which are multivariate scoring rules that decompose into sums of univariate scoring rules for individual components.
\citet{dawid2012} characterized scoring rules for general discrete outcome spaces which are local in that they depend only on the values of the probability mass function on a neighborhood of $x$ in a given undirected graph.

\subsection{$f$-Scores, $gf$-scores and generalized kernel scores}\label{sec:f-scores}

Let $\mu$ be a $\sigma$-finite measure on $\cY$ and $\sP_{\mathrm{ac}}$ the set of measures which are absolutely continuous with respect to $\mu$. For example, if $\cY = \bbR^d$ and $\mu$ is the Lebesgue measure, we consider absolutely continuous distributions on $\bbR^d$; if $\cY$ is finite and $\mu$ is the counting measure, we have categorical random variables.

Given a concave and differentiable function $f: [0,\infty)\to \bbR$, we can define a concave entropy function $H_{f}: \sP \to \overline{\bbR}$ as 
\begin{equation}\label{eq:f-entropy}
H_f(\bbP) = \int f(p(x)) \D\mu(x),
\end{equation}
where $p$ denotes the density of $\bbP \in \sP \subseteq \sP_{\mathrm{ac}}$ such that $H_f(\bbP) < \infty$ for $\bbP \in \sP$. Using Theorem \ref{thm:gneiting-char} and Equation \eqref{eq:diff_entropy}, it follows that the corresponding proper scoring rule $S_{f}: \cX \times \sP \to \overline{\bbR}$ is given by 
\begin{equation*}
    S_{f}(\bbP,y) = f'(p(y)) + H_f(\bbP) - \int f'(p(x)) p(x) \D \mu(y).
\end{equation*}
Such scoring rules were introduced by \citet{grunwald2004} as separable Bregman scores, although we refer to them as \emph{$f$-scores} to avoid confusion with Bregman loss functions and divergences. \citet{Perez1967} introduced the entropies in Equation \ref{eq:f-entropy} as generalized $f$-entropies. However, they are different from the $f$-entropies of \citet{Csiszar1972}. $f$-scores are the only proper scoring rules of the form $S(\bbP, y) = g(p(y)) - l(\bbP)$ \citep{dawid2007}. 

Many important scores are instances of this family. The logarithmic score arises for $f(u) = -u\log u$, compare Example \ref{ex:logarithmic2} and Section \ref{sec:local}. The concave function $f(u) = -u^2$ leads to the well-known \emph{Brier} or \emph{quadratic score} 
\begin{equation}\label{eq:Brier}
S_{\mathrm{Brier}}(\bbP, y) = -2p(y) + \int p(x)^{2} \D\mu(x), 
\end{equation}
compare Example \ref{ex:Brier}, where it is stated in an equivalent form for binary outcomes.

An extension of $f$-scores are $gf$-scores with entropy functions of the form 
\begin{equation}\label{eq:fg-entropy}
H_{gf}(\bbP) = g\Big(\int f(p(x)) \D\mu (x)\Big) = g(H_f(\bbP)),
\end{equation}
where $f$ and $g$ are such that $H$ is concave. For example, $f$ could be concave and $g$ concave and increasing, or, $f$ convex and $g$ concave and decreasing. If $f$ and $g$ are differentiable, applying Theorem \ref{thm:gneiting-char} and Equation \eqref{eq:diff_entropy} yields 
\begin{equation*}
    S_{gf}(\bbP,y) = g'(H_f(\bbP))f'(p(y)) + g(H_f(\bbP)) - g'(H_f(\bbP))\int f'(p(x)) p(x) \D\mu(x).
\end{equation*}
The spherical \citep{Roby1965} and pseudospherical scoring rules \citep{Good1971} given by 
\begin{equation*}
    S_{\mathrm{Spherical}}(\bbP, y) = -\frac{p(y)}{\left[\int p(x)^{2} \D\mu(x)\right]^{1/2}} \quad \text{and} \quad S_{\mathrm{pSpherical}}(\bbP, y) = -\frac{p(y)^{\alpha - 1}}{\left[\int p(y)^{\alpha} \D\mu(x)\right]^{1-1/\alpha}},
\end{equation*}
respectively are instances of $gf$-scores with $f(u) = u^{\alpha}$ and $g(u) = -u^{1/\alpha}$ for $\alpha = 2$ and $\alpha > 1$, respectively. In this case, strict concavity of the entropy follows from the Minkowski inequality.

The logarithmic, quadratic and spherical scores have been axiomatically characterized by \citet{Shuford1966}, \citet{Selten1998} and \citet{Jose2009}, respectively. Recently, \citet{Shao2024} explored their application to training large language models (LLMs).

Another twist on the idea is to replace the integral in Equation \ref{eq:fg-entropy} with a different entropy, for example, the entropy of a kernel score as at Equation \ref{eq:kernel_entropy}, that is, considering entropies of the form 
\[
H(\bbP) = g\left(\frac{1}{2}\iint h(x,x') \D\bbP(x)\D\bbP(x')\right) = g(H_{[h]}(\bbP))
\]
for a conditionally negative definite kernel $h$ and a differentiable increasing concave function $g:[0,\infty) \to \bbR$. This leads to the \emph{generalized kernel scores} given by 
\begin{equation*}
    S_{g[h]}(\bbP, y) =  g'(H_{[h]}(\bbP))\int h(x,y)\D\bbP(x) +  g(H_{[h]}(\bbP)) - g'(H_{[h]}(\bbP))H_{[h]}(\bbP),
\end{equation*}
introduced by \citet{Bolin2023}.

\subsection{Proper scoring rules induced by elicitable functionals}

Proper scoring rules can also be constructed by reducing the probability measure $\bbP$ first to some summary measure, a functional $T:\sP \to \cZ$ and then comparing distributions by comparing their summaries. Such scoring rules are typically proper but not strictly proper. 

A functional $T:\sP \to \cZ$ is called \emph{elicitable} if there exists a scoring function $s:\cZ\times \cY \to \overline{\bbR}$ such that
\[
\bbE s(T(\bbP),Y) \le \bbE s(z,Y)
\]
for all $z \in \cZ$, $\bbP \in \sP$, where $Y \sim \bbP$, subject to the existence of expectations, with equality if and only if $z = T(\bbP)$. Such a scoring function $s$ is called \emph{strictly consistent} for the functional $T$. Relevant functionals that are elicitable are the mean, quantiles, expectiles, and ratios of expectations. Some important functionals like the variance are not elicitable. There is a large literature on elicitability with possible starting points being \citet{LambertPennockETAL2008,Gneiting2011,FisslerZiegel2016}.

Given a strictly consistent scoring function $s$ of an elicitable functional $T$, the scoring rule $S:\sP \times \cY \to \overline{\bbR}$ given by
\[
S(\bbP,y) = s(T(\bbP),y)
\]
is proper, see \citet[Theorem 3]{Gneiting2011}. For example, since the squared error $s(x,y) = (x-y)^2$ is strictly consistent for the mean, the scoring function $(\bbP,y) \mapsto (\int x\D\bbP(x) - y)^2$ is proper but it is clearly not strictly proper unless the class $\sP$ is a location family. 

\begin{example}[Dawid-Sebastiani Scores] 
The pair of expectation and covariance matrix of multivariate distributions is elicitable. The resulting proper scoring rules have been introduced by \citet{dawid1999}.  An interesting member of this class is given by 
 \begin{equation*}
        S_{\mathrm{DS}}(\bbP, \by) = \log \det \Sigma_{\bbP} + (\by - m_{\bbP})^{\top}\Sigma_{\bbP}^{-1}(\by - m_{\bbP}),
    \end{equation*}
where $\by \in \bbR^d$ and $m_{\bbP} = \bbE[\bX]$ and $\Sigma_{\bbP} = \bbE[(\bX - m_{\bbP})(\bX - m_{\bbP})^{\top}]$ are the mean and covariance of $\bX \sim \bbP$. It is assumed that the covariance matrix is non-singular. The proper scoring rule $S_{\mathrm{DS}}$ is equivalent to the negative log-likelihood of a multivariate Gaussian distribution. As in the univariate case, it is a proper scoring rule for all distributions with non-singular covariance matrix. 
\end{example}

\section{Scoring Rule Decompositions}\label{sec:decomposition}

Proper scoring rules can be decomposed into interpretable components that provide insight concerning the performance of a given method. In the context of estimation, it is natural to consider an analogue of a (conditional) bias-variance decomposition, whereas in forecast evaluation, proper scoring rules are typically decomposed into components quantifying miscalibration, discrimination ability, and uncertainty. 

The familiar bias-variance decomposition of the squared error loss can be generalized to Bregman divergences \citep{pfau2013, Buja2005}. 
Since proper scoring rules are extensions of Bregman loss functions, we can derive a similar decomposition of the performance of a probabilistic predictor. Following the conditional setting in Section \ref{sec:psr-estimation}, let $\smash{\bbP_{\hat{\theta}, x}}$ be an estimator of $\bbP_{Y|X = x}$ where the parameter $\smash{\hat{\theta}} = \smash{\hat{\theta}(\cD)}$ has been estimated from a sample $\cD$ of $(X, Y)$. 
Then we can write
\begin{equation*}
    \bbE[d(\smash{\bbP_{\hat{\theta}, X}}, \bbP_{Y|X})] = \underbrace{\bbE[d(\smash{\bbP_{\hat{\theta}, X}}, \Bar{\bbP}^{\phantom{\ast}}_{X})]}_{\mathrm{variance}} + \underbrace{\bbE[d(\Bar{\bbP}^{\phantom{\ast}}_{X}, \bbP_{Y|X})]}_{\mathrm{bias}}
\end{equation*}
where $\Bar{\bbP}_{x} = \argmin_{\bbP} \bbE[d(\smash{\bbP_{\hat{\theta}, x}}, \bbP)]$ and $\bbP_{Y|X =x} = \argmin_{\bbP} \bbE[S(\bbP, Y) \,|\, X = x]$ is the best predictor, assuming the two exist. If we think of $\smash{\Bar{\bbP}_{x}}$ as the ``expectation'' of the predictor $\smash{\bbP_{\hat{\theta}, x}}$ over $\cD$, then just like in the original bias-variance decomposition, the variance term measures how much the estimator $\smash{\bbP_{\hat{\theta}, x}}$ varies around $\smash{\Bar{\bbP}_{x}}$ and the bias term measures the distance between $\smash{\Bar{\bbP}_{x}}$ and the best predictor $\bbP_{Y|X=x}$. Indeed, the two decompositions coincide for the Brier score. While $\Bar{\bbP}_{x}$ corresponds to a \emph{left Bregman projection}, commonly studied in optimization, the optimal predictor $\bbP_{Y|X=x}$ is a \emph{right Bregman projection}, which is of greater importance in statistics \citep{Bauschke2018}. 

In meteorology, a different kind of decomposition is used for assessing the overall performance of probabilistic forecasts, see \citet{Broecker2009,Kull2015,ArnoldWalzETAL2024} and references therein. The expected proper scoring rule is decomposed into terms corresponding to discrimination ability (DSC, or resolution), miscalibration (MCB, or reliability) and uncertainty (UNC). Let $\hat{\bbP}$ denote a (random) probabilistic forecast of $Y$. Then we can write
\begin{equation*}
    \bbE[S(\hat{\bbP}, Y)] 
    = \underbrace{\,\bbE[d(\hat{\bbP}, \bbP_{\smash{\raisebox{-1.0ex}{$\hat{}\,$}}})]\,}_{\mathrm{MCB}}
    - \underbrace{\,\bbE[d(\Bar{\bbP}, \bbP_{\smash{\raisebox{-1.0ex}{$\hat{}\,$}}})]\,}_{\mathrm{DSC}} 
    + \underbrace{\,H(\Bar{\bbP}_{\phantom{\smash{\raisebox{-0.9ex}{$\hat{}\,$}}}})}_{\mathrm{UNC}\,}
\end{equation*}
where $\bbP_{\smash{\raisebox{-1.2ex}{$\hat{}\,$}}}$ denotes the conditional law of the outcome $Y$ given the forecast $\hat{\bbP}$ and $\Bar{\bbP}$ is the marginal law of $Y$, often called the \emph{climatological forecast} in meteorology. The miscalibration (MCB) term measures how different the forecast $\hat{\bbP}$ is from the optimal calibrated forecast $\bbP_{\smash{\raisebox{-1.2ex}{$\hat{},$}}}$ with the same information, which is the conditional law of $Y$ given $\hat{\bbP}$. Discrimination ability (DSC) quantifies how much more discriminative the optimal calibrated forecast $\bbP_{\smash{\raisebox{-1.2ex}{$\hat{}\,$}}}$ for the given information is in comparison to only marginal information about $Y$. Finally, the uncertainty term (UNC) informs about the general difficulty of the forecasting problem. 

\section{Applications}\label{sec:applications}

In this section, we briefly review the many ways in which proper scoring rules can be used to address computational and statistical considerations in estimation and forecast evaluation contexts.

\subsection{Computational Tractability}

There are often more convenient and effective ways to model a probability distribution than as a member of a standard parametric family. Estimation of such models using the logarithmic score in the form of maximum likelihood can, however, present an often insurmountable computational burden. Proper scoring rules can provide computationally viable alternatives.

\subsubsection{Non-normalized Densities}\label{sec:nonnorm}

Although the logarithmic score yields closed-form expressions for many standard parametric families, it is often intractable for more complicated ones. This occurs frequently in statistics \citep[see for example][]{Fondeville2018} and especially in some modern machine learning applications such as \emph{generative diffusion models} \citep{song2019}, where we need to model probability densities more flexibly, say as $p_{\theta}(\bx) \propto \exp \eta_{\theta}(\bx)$, where $\eta_{\theta}$ could be a neural network. Estimating $\theta$ via maximum likelihood for such complicated models would require evaluating the normalization constant $c_{\theta} = \int \exp \eta_{\theta}(\bx) \D\bx$, which is prohibitively expensive even for moderately high dimensions. The Hyv\"{a}rinnen score allows for a much more tractable estimation of such models by circumventing the need to evaluate the normalization constant $c_{\theta}$ and reducing the problem to minimizing the empirical risk corresponding to
\begin{equation*}
    \Delta_{\bx} \eta_{\theta}(\bx) + \frac{1}{2}\|\nabla_{\bx}\eta_{\theta}(\bx)\|^{2}
\end{equation*}
which depends only on $\eta_{\theta}$. 
In this way, it complements methods like Langevin dynamics, which allow sampling from $p_{\theta}$ using only $\eta_{\theta}$.
This makes it particularly convenient for many statistical applications where normalization can be an issue such as estimation of high-dimensional graphical models \citep{Lin2016}, causal discovery \citep{Rolland2022} and estimation of point processes \citep{Cao2024}. In Bayesian statistics, the Hyv\"{a}rinen score has been used for likelihood-free inference \citep{Pacchiardi2022}, model selection \citep{dawid2015a}, the derivation of objective priors \citep{Leisen2020}, model comparison \citep{Shao2019} and sampling from Markov chain Monte Carlo and variational inference approaches \citep{Zhang2018, Modi2023}. The recent emergence of diffusion models as a powerful approach to generative modeling of images, audio, video and text, is also partly due to advancements in the implementation of score matching \citep[see for example][]{Dhariwal2021}.

The pseudo-spherical scoring rules also enable estimation with non-normalized densities due to their homogeneity and for this reason, they have been applied to estimating distributions on discrete spaces \citep{Takenouchi2015} and 
training energy-based models \citep{Yu2021}.

\subsubsection{Transformation Models}

In many settings in statistics and machine learning, such as that of \emph{structural equation models} \citep{Dimitruk2007} or \emph{implicit generative models} \citep{Mohamed2017}, we model probability distributions $\{\bbP_{\theta}\}_{\theta \in \Theta}$ (or conditional distributions $\{\bbP_{\theta,\bx}\}_{\theta \in \Theta}$) as the laws of parameterized transformations $g_{\theta}(\bN)$ (or conditionally as $g_{\theta}(\bx, \bN)$) for $\theta \in \Theta$, where $\bN$ is some standard random vector such as a multivariate standard Gaussian, instead of a parametric family of densities $\{p_{\theta} : \theta \in \Theta\}$. Fitting such probability distributions by estimating $\theta$ using maximum likelihood via a change of variables would require potentially expensive Jacobian computations and is difficult to implement if the transformations $g_{\theta}$ are not bijective or do not have a closed-form inverse. The original approach to \emph{generative adversarial networks} (GANs) achieved this via an innovative min-max formulation which however complicates training \citep{Goodfellow2014}. Kernel scores have proven to be a viable alternative approach to training such models since they can be estimated directly by sampling from $\bN$ as
\begin{equation*}
    \hat{S}(\bbP_{\theta, \bx}, y) = \frac{1}{m}\sum_{i=1}^{m} h(g_{\theta}(\bx, \bN_{i}), y) - \frac{1}{2m(m-1)}\sum_{i =1}^{m}\sum_{i': i' \neq i} h(g_{\theta}(\bx, \bN_{i}), g_{\theta}(\bx, \bN_{i'}))
\end{equation*}
in the conditional case, for example, where $\{\bN_{i}\}_{i=1}^{m}$ are independent realizations of $\bN$, without requiring the density or an unstable adversarial formulation like GANs \citep{Bouchacourt2016, Gritsenko2020}. 

Consequently, they have been found to be very useful for probabilistic forecasting \citep{Pacchiardi2024}, estimating conditional transformation models \citep{Hothorn2014}, postprocessing of numerical weather prediction models \citep{Chen2024}, Bayesian generalized likelihood-free inference \citep{Pacchiardi2024b}, extrapolating neural networks \citep{ShenMeinshausen2023}, uncertainty quantification of dynamical systems \citep{Buelte2024}, training neural SDEs \citep{Issa2023}, representation learning \citep{Vahidi2024} and even estimating generative diffusion models \citep{Bortoli2025}. Although \citet{GneitingRafteryETAL2005} was the first to apply kernel score minimization for estimating a parametric family, the earliest instance of fitting conditional transformation models appears to be \citet{Hothorn2014} and \citet{Bouchacourt2016} (for neural networks). Non-conditional transformation models based on neural networks appear to have been first estimated using kernel scores by \citet{Dziugaite2015} and \citet{Li2015}. 

\subsection{Statistical considerations}\label{sec:stat-performance}

In the literature on forecast evaluation, many proper scoring rules have been constructed with the aim of detecting specific forecast deficiencies, or for targeting specific types of outcomes such as high impact events. 
Although these scoring rules have been shown to be useful in evaluation, they have not been widely used in estimation contexts. 

\subsubsection{Sensitivity and Robustness}

Many scoring rules have been proposed in order to be sensitive or robust to particular statistical properties of the observations. For example, \citet{Scheuerer2015} proposed the variogram score in order to remedy the relative insensitivity of the energy scores to the correlation structure of multivariate distributions leading to poor discrimination ability \citep{Pinson2013}. In contrast, \citet{Bolin2023} considered a setting where observations are drawn from different distributions, where they show that common scoring rules such as the CRPS overweigh observations coming from more dispersed distributions. Motivated by local geometry considerations (see Section \ref{sec:geometry}), they proposed the \emph{scaled CRPS} to correct for this relative oversensitivity to observation variance which can be understood as a nuisance parameter. Another modification known as the \emph{scaled weighted CRPS} has been proposed by \citet{Olafsdottir2024} in this setting to account for extremes.

\subsubsection{Weighted Scoring Rules}

In many statistical applications such as risk management \citep{holzman2017} and high impact weather prediction \citep{allen2023a}, certain outcomes are of considerably greater consequence to the user than others and naturally, we want the scoring rule to weigh forecast performance for such outcomes accordingly. To achieve this, \citet{allen2023a} propose using kernel scores with kernels modified using \emph{threshold-weighting} or 
\emph{vertical-rescaling}, that is, $h'(x, y) = h(v(x), v(y))$ and $h'(x, y) = h(x, y)w(x)w(y)$, respectively, by using a chaining function $v: \cY \to \cY$ or weighting function $w:\cY \to \bbR$. By choosing $h$ to be a translation invariant kernel, we can control the weight exclusively through the chaining and weighting functions. This generalizes the threshold-weighted CRPS introduced by \citet{GneitingRanjan2011a}, which also introduced a quantile-weighted version of the CRPS. Towards the same end, \citet{diks2011} propose weighted versions of the logarithmic score based on which \citet{holzman2017} propose a different approach to modifying scoring rules, which generally does not lead to kernel scores even if the original score was a kernel score. 

For binary outcomes, \citet{Buja2005} argued that scoring rules should be tailored to the relative cost implications of different types of errors. In a decision-theoretic context, \citet{Johnstone2011} proposed \emph{tailored scoring rules} which are proper scoring rules adapted to the utility function of the decision maker.

\section{Limitations and Extensions}\label{sec:extensions}

Proper scoring rules are versatile and important tools for estimation and forecast evaluation. However, they also have limitations. Proper scoring rules cannot assess tail properties of distributions. For example, under weak conditions on the scoring rule $S$, \citet{BrehmerStrokorb2019} demonstrate that for a given distribution $\bbP$ on $\bbR$, one can always find a distribution $\bbQ$ such that the divergence $d(\bbQ,\bbP) = S(\bbQ,\bbP) - S(\bbP,\bbP)$ is arbitrarily small but $\bbP$ and $\bbQ$ are not tail equivalent. This negative result has further been strengthened by \citet{AllenKohETAL2024}.  

In survival analysis, the classical approach to inference based on maximum likelihood (i.e. the logarithmic score) is complicated by the fact that observations are censored because this often renders the likelihood intractable. Consequently, many approaches based on alternative scoring rules such as the survival-CRPS \citep{Avati2020} and integrated Brier score \citep{graf1999} have been devised over the years, see also \citet{dawid2014}. Unfortunately, these have been found to be improper by \citet{rindt2022}, implying that they are unfit for evaluation or estimation. \citet{rindt2022} establish the propriety of the classical right-censored log-likelihood and propose a tractable approach to survival analysis based on it. Moreover, \citet{yanagisawa2023} proposed proper extensions of the Brier and ranked probability scores to survival analysis.

\citet{Hanson2003} has proposed an interesting extension of proper scoring rules called \emph{market scoring rules} for improving prediction markets. The basic premise of a prediction market is as follows: for a binary event of interest, say if it will rain on a given day, one can float a security in the prediction market which pays one dollar if (and when) the event takes place and nothing if it does not. 
The price of such a security can then be interpreted as the probability of the event (as forecasted by the market). However, obtaining the market forecast for a continuous quantity, such as the amount of rainfall on a given day, or a joint forecast for a multitude of related events, would require a very large number of such binary securities. Consequently, each security will barely have any buyers or sellers and the market will not function. This is known as the \emph{thin market problem}. Hanson's solution is to use instead a single security based on a proper scoring rule $S$ on the possible probability distributions of the outcomes and an \emph{automated market maker} (AMM) which functions as follows: if a participant disagrees with the AMM forecast $\bbP_{t}$ at time $t$, he can replace it with his own $\bbP_{t+1}$ and be paid\footnote{We use scores in their positive orientation here.} $S(\bbP_{t+1}, y) - S(\bbP_{t}, y)$ if the random quantity realized is $y$. The idea has received renewed interest due to recent developments concerning cryptocurrencies. Interesting further developments include \citet{Hanson2007}, \citet{Abramowicz2007} and \citet{Berg2009}.

\section*{Acknowledgements}

The authors would like to thank an anonymous reviewer, Sam Allen, Philip Dawid, Ritabrata Dutta, Sebastian Engelke, Rafael Frongillo, Tilmann Gneiting, Daniel Hug, Pierre Jacob, Alexander Jordan, Kristof Kraus, Andrea Nava, Romain Pic, Mathias Trabs, and Xinwei Shen for comments, inspiring and fruitful discussions, and useful pointers to the literature.

\bibliography{references}

\appendix

\section*{Appendix}

\section{Proofs for Section \ref{sec:properscoringrules}}

\begin{proof}[Proof of Theorem \ref{thm:regression}]
    If $S$ is a (strictly) proper scoring rule, then, by Fubini's theorem,
    \begin{align*}
        \bbE[S(\bbP^{X}, Y)] 
        &= \int_{\cX} \int_{\cY} S(\bbP^{x}, y) \D\bbP_{Y|X = x}(y) \D\bbP_{X}(x) \\
        &= \int_{\cX} S(\bbP^{x}, \bbP_{Y|X = x}) \D\bbP_{X}(x)
    \end{align*}
    is minimized if (and only if) $\bbP^{x} = \bbP_{Y| X= x}$ for $x \in \cX$ $\bbP_{X}$-almost surely. For the converse, we only need to take $X$ constant.
\end{proof}

\setcounter{definition}{0}
\begin{remark}[Derivation of Hyv\"{a}rinen score entropy]
Clearly,
\begin{align*}
    H(\bbP) 
    &=  \int \left[\Delta_{\by}\log p(\by) + \frac{1}{2}\|\nabla_{\by} \log p(\by)\|^{2}\right]p(\by) \D\by, \\
    &=\int [\nabla_{\by} \cdot \nabla_{\by}]\log p(\by)p(\by) \D\by + \frac{1}{2}\int \|\nabla_{\by} \log p(\by)\|^{2}p(\by) \D\by.
\end{align*}
Using the divergence theorem,
\begin{align*}
    \int [\nabla_{\by} \cdot \nabla_{\by}]\log p(\by)p(\by) \D\by 
    &=\int \nabla_{\by}\cdot[\nabla_{\by}\log p(\by)p(\by)] \D\by - \int [\nabla_{\by}\log p(\by)]\cdot [\nabla_{\by}p(\by)] \D\by \\
    &=\oint \nabla_{\by}\log p(\by)p(\by) \D S(\by) - \int [\nabla_{\by}\log p(\by)]\cdot [\nabla_{\by}\log p(\by)] p(\by)\D\by \\
    &= 0 - \int \|\nabla_{\by}\log p(\by)\|^{2} p(\by)\D\by
\end{align*}
where the surface integral is zero because $\|\nabla_{\by}\log p(\by)\| \to 0$ as $\by \to 0$. The conclusion follows.
\end{remark}

The entropy $H$ is said to be \emph{twice differentiable} at $\bbP \in \sP$ if it is differentiable at $\bbP \in \sP$ with gradient $\nabla_{\bbP}H$ and there exists a $\sP \otimes \sP$-integrable function $\nabla^{2}_{\bbP}H: \cY \times \cY \to \overline{\bbR}$ such that
\begin{multline}\label{eq:hessian}
    \lim_{\alpha \to 0^+} \frac{1}{\alpha^{2}}\Big[H((1-\alpha)\bbP + \alpha \bbQ) - H(\bbP) - \alpha \langle \nabla_{\bbP}H, \bbQ - \bbP \rangle\Big]\\ = \frac{1}{2} \langle \nabla^{2}_{\bbP}H (\bbQ - \bbP), \bbQ-\bbP \rangle\\ = \frac{1}{2}\iint \nabla^2_\bbP H (x,x')\D (\bbQ - \bbP)(x)\D(\bbQ - \bbP)(x').
\end{multline}
Since the entropy $H$ is concave, the second derivative $\nabla^2_\bbP H$ is negative definite in the sense that the right-hand side of \eqref{eq:hessian} is always less or equal to zero. This can be seen by using that
\[
H((1-\alpha)\bbP + \alpha \bbQ) \le H(\bbP) + \langle \nabla_\bbP H, (1-\alpha)\bbP + \alpha \bbQ - \bbP\rangle
\]
due to the supergradient inequality \eqref{eq:supergradient}.

\begin{proof}[Proof of Theorem \ref{thm:div-hessian}]
    If the entropy $H: \sP \to \overline{\bbR}$ is twice-differentiable, then the function $h(t) = H(t \bbQ + (1-t) \bbP)$ can be expressed as a 1st order Taylor series expansion around $t = 0$ as $h(t) = h(0) + t h'(0) + r_{2}(t)$
where the remainder $r_{2}(t)$ can be expressed in the integral form as
\begin{equation}\label{eqn:remainder}
    r_{2}(t) = \frac{1}{2} \left\langle \bbQ - \bbP, \left[ \int_{0}^{1} (t-u)\nabla_{u\bbQ + (1-u)\bbP}^{2}H \D u \right] (\bbQ - \bbP)\right\rangle.
\end{equation}
Evaluating the Taylor series and the remainder term (\ref{eqn:remainder}) for $t = 1$ leads to the desired result.
\end{proof}

\section{Proofs for Section \ref{sec:kernel}}

\begin{proof}[Proof of Theorem \ref{thm:kernel-score}]
\citet[Proposition 20, Remark 21]{SejdinovicSriperumbudurETAL2013} imply that for all $\bbP,\bbQ \in \sP_h$, $\iint h(x, y) \D\bbP(x)\D\bbQ(y)$ is finite. 
 The entropy $H(\bbP)$ is concave, since
    \begin{multline}\label{eqn:strict-concave} 
        0 \leq H(\lambda \bbP + (1-\lambda) \bbQ) - \lambda H(\bbP) - (1-\lambda) H(\bbQ) 
        = -\lambda(1-\lambda)\iint h(x, x') \D(\bbP - \bbQ)(x) \D(\bbP - \bbQ)(x')
    \end{multline}
    for every $\bbP, \bbQ \in \sP_h$ and $\lambda \in [0, 1]$, if $h$ is a conditionally negative definite kernel by taking $\bbP, \bbQ$ to be discrete measures and using approximation arguments. Furthermore, strict concavity of $H$ (that is, equality in (\ref{eqn:strict-concave}) if and only if $\lambda \in \{0, 1\}$) coincides with $h$ being strongly conditionally negative definite. The conclusion follows by applying Theorem \ref{thm:gneiting-char} to $H$, where the subgradient can be computed using \eqref{eq:diff_entropy}.
\end{proof}

\begin{proof}[Proof of Theorem \ref{thm:kernel-scores-char}]
Suppose that the kernel score is proper. Then $h$ must be conditionally negative definite and \citet[Proposition 3.3.2]{berg1984} implies that 
there exists a Hilbert space $\sH$, a subset $\{\psi_{y}\}_{y \in \cY} \subset \sH$ and a function $f: \cY \to \bbR$, such that
\begin{equation}\label{eq:hpsi}
 h(x, y) = \|\psi_{x} - \psi_{y}\|_{\sH}^{2} + f(x) + f(y)
\end{equation}
for every $x, y \in \cY$. This implies the first equality in \eqref{eqn:kernel-score-embedding}. The second equality is a straight forward computation, and the statement on strict propriety follows from the last expression of the divergence at \eqref{eqn:kernel-score-embedding}. The converse follows since $h$ at \eqref{eq:hpsi} is conditinally negative definite. 
\end{proof}

\begin{remark}\label{rem:wlog-kernel}
   Requiring $h$ to be non-negative does not lead to any loss of generality. Indeed, by \eqref{eq:hpsi}, the entropy of any $h$ can be written as 
    \begin{equation*}
        H(\bbP) = \frac{1}{2}\iint h(x, y) \D\bbP(x)\D\bbP(y) = \frac{1}{2}\iint \|\psi_{x} - \psi_{y}\|_{\sH}^{2} \D\bbP(x)\D\bbP(y) + \int f(x) \D\bbP(x)
    \end{equation*}
    where the linear term $\int f(x) \D\bbP(x)$ is inconsequential and the kernels $h$ and the non-negative kernel $(x, y) \mapsto \|\psi_{x} - \psi_{y}\|_{\sH}^{2}$ lead to the same kernel score.
\end{remark}

For the proof of Theorem \ref{thm:sym-char}, we need the following definition. The \emph{supergradient map} $\bbP \mapsto h_{\bbP}$ is \emph{weakly continuous} if $\lim_{t \to 0+} \langle h_{\bbP + t\Delta\bbP}, \bbQ\rangle = \langle h_{\bbP}, \bbQ \rangle$ for $\bbP, \bbP', \bbQ \in \sP$ where $\Delta\bbP = \bbP' - \bbP$.

\begin{proof}[Proof of Theorem \ref{thm:sym-char}]
By Theorem \ref{thm:gneiting-char}, there exists a concave function $H: \sP \to \bbR$ and supergradients $\{h_{\bbP}\}_{\bbP \in \sP}$ such that 
    \begin{equation*}
        S(\bbP, y) = H(\bbP) + \langle h_{\bbP}, \delta_{y} - \bbP \rangle 
    \end{equation*}
    for every $y \in \cY$. For $\bbP,\bbQ \in \sP$, using $d(\bbP, \bbQ) = d(\bbQ, \bbP)$, we obtain
    \begin{equation}\label{eqn:symmetry-div}
        H(\bbQ) - H(\bbP) = \frac{1}{2}\langle h_{\bbP} + h_{\bbQ},  \bbQ - \bbP \rangle.  
    \end{equation}
   By (\ref{eqn:symmetry-div}), the Gateaux derivative of $H$ at $\bbQ$ in the direction $\Delta\bbQ = \bbQ' - \bbQ$, where $\bbQ' \in \sP$, exists and can be evaluated as
    \begin{equation*}
        \lim_{t \to 0} \frac{H(\bbQ + t\Delta\bbQ) - H(\bbQ)}{t} = \frac{1}{2} \lim_{t \to 0} ~\langle h_{\bbQ} + h_{\bbQ + t\Delta\bbQ}, \Delta\bbQ \rangle = \langle h_{\bbQ}, \Delta\bbQ\rangle
    \end{equation*}
    using the weak continuity of $\bbP \mapsto h_{\bbP}$.
    Evaluating the Gateaux derivative of both sides of (\ref{eqn:symmetry-div}) with respect at $\bbQ$ in the direction $\Delta\bbQ = \bbQ' - \bbQ$ gives
    \begin{align*}
        \langle h_{\bbQ}, \Delta\bbQ\rangle
        &= \lim_{t \to 0} \frac{1}{2t}\Big[  \langle h_{\bbP} + h_{\bbQ + t\Delta\bbQ},  \bbQ + t\Delta\bbQ - \bbP \rangle - \langle h_{\bbP} + h_{\bbQ},  \bbQ - \bbP \rangle \Big] \\
        &= \lim_{t \to 0} \frac{1}{2t}\Big[  \langle h_{\bbP} + h_{\bbQ + t\Delta\bbQ},  \bbQ - \bbP \rangle - \langle h_{\bbP} + h_{\bbQ},  \bbQ - \bbP \rangle \Big] \\
        &+\, \lim_{t \to 0} \frac{t}{2t}\Big[  \langle h_{\bbP} + h_{\bbQ + t\Delta\bbQ}, \Delta\bbQ \rangle  \Big] \\
        &= \lim_{t \to 0} \frac{1}{2t}\Big[  \langle h_{\bbQ + t\Delta\bbQ} - h_{\bbQ},  \bbQ - \bbP \rangle \Big] + \frac{1}{2}\Big[  \langle h_{\bbP} + h_{\bbQ}, \Delta\bbQ \rangle  \Big] \\
    \end{align*}
    It follows that
    \begin{equation}\label{eqn:gat-diff-1}
        \lim_{t \to 0} \frac{1}{t}\Big[  \langle h_{\bbQ + t\Delta\bbQ} - h_{\bbQ},  \bbQ - \bbP \rangle\Big] = \langle h_{\bbQ} - h_{\bbP},  \Delta\bbQ \rangle
    \end{equation}
    for every $\bbP, \bbQ, \bbQ' \in \sP$. By taking the difference of (\ref{eqn:gat-diff-1}) with $\bbP = \bbP' \in \sP$ and with $\bbP \in \sP$, we get
    \begin{equation}\label{eqn:gat-diff}
        \lim_{t \to 0} \frac{1}{t}\Big[  \langle h_{\bbQ + t\Delta\bbQ} - h_{\bbQ},  \Delta\bbP  \rangle\Big] = \langle h_{\bbP'} - h_{\bbP},  \Delta\bbQ \rangle.
    \end{equation}
    for every $\bbP, \bbP', \bbQ, \bbQ' \in \sP$ where $\Delta\bbP = \bbP' - \bbP$. Using (\ref{eqn:gat-diff}) on itself, we get
    \begin{align*}
        \langle h_{\bbP'} - h_{\bbP},  \Delta\bbQ \rangle 
        &= \lim_{t \to 0} \frac{1}{t}  \langle h_{\bbQ + t\Delta\bbQ} - h_{\bbQ},  \Delta\bbP \rangle \tag*{By (\ref{eqn:gat-diff})}\\
        &= \lim_{t \to 0} \frac{1}{t}\left[ \lim_{s \to 0} \frac{1}{s} \Big[\langle h_{\bbP + s\Delta\bbP} - h_{\bbP},  \bbQ + t\Delta\bbQ - \bbQ \rangle\Big]\right] \tag*{By (\ref{eqn:gat-diff})}\\
        &= \lim_{s \to 0} \frac{1}{s} \langle h_{\bbP + s\Delta\bbP} - h_{\bbP},  \Delta\bbQ\rangle \\
        &= \langle h_{\bbQ'} - h_{\bbQ},  \Delta\bbP \rangle.
    \end{align*}
    Thus, 
    \begin{equation}\label{eqn:sym-gat}
        \langle h_{\bbP'} - h_{\bbP},  \Delta\bbQ \rangle = \langle h_{\bbQ'} - h_{\bbQ},  \Delta\bbP \rangle
    \end{equation}
    for every $\bbP, \bbP', \bbQ, \bbQ' \in \sP$.
    
    Let $x_{0}, y_{0} \in \cX$. We expand the dual mappings into integrals twice as follows:
    \begin{align*}
        \langle h_{\bbP'} - h_{\bbP},  \Delta\bbQ \rangle 
        &= \int \Big[ h_{\bbP'}(y) - h_{\bbP}(y) \Big] \D(\Delta\bbQ)(y) \\
        &= \int \Big[ h_{\bbP'}(y) - h_{\bbP}(y) - h_{\bbP'}(y_{0}) + h_{\bbP}(y_{0}) \Big] \D(\Delta\bbQ)(y) \\
        &= \int \langle h_{\bbP'} - h_{\bbP}, \delta_{y} - \delta_{y_{0}} \rangle \D(\Delta\bbQ)(y) \\
        &= \int \langle h_{\delta_{y}} - h_{\delta_{y_{0}}}, \Delta\bbP \rangle  \D(\Delta\bbQ)(y) \tag*{By (\ref{eqn:sym-gat})}\\
        &= \iint \langle h_{\delta_{y}} - h_{\delta_{y_{0}}}, \delta_{x} - \delta_{x_{0}} \rangle \D(\Delta\bbP)(x)\D(\Delta\bbQ)(y)\\
        &= \iint \langle h_{\delta_{y}}, \delta_{x} \rangle \D(\Delta\bbP)(x)\D(\Delta\bbQ)(y).
    \end{align*}
    Thus,
    \begin{equation*}
        \langle h_{\bbP'} - h_{\bbP}, \Delta\bbQ \rangle = \int_{\cX} \int_{\cX} \langle h_{\delta_{y}}, \delta_{x} \rangle \D(\Delta\bbP)(x)\D(\Delta\bbQ)(y)
    \end{equation*}
    and by symmetry, 
    \begin{equation*}
        \langle h_{\bbQ'} - h_{\bbQ}, \Delta\bbP \rangle = \int_{\cX} \int_{\cX} \langle h_{\delta_{y}}, \delta_{x} \rangle \D(\Delta\bbQ)(x)\D(\Delta\bbP)(y).
    \end{equation*}
    By (\ref{eqn:sym-gat}) and the Fubini-Tonelli theorem, we derive the integral representation
    \begin{equation}\label{eqn:integral-repn}
        \langle h_{\bbQ'} - h_{\bbQ}, \Delta\bbP \rangle =  \langle h_{\bbP'} - h_{\bbP},  \Delta\bbQ \rangle = \int_{\cX} \int_{\cX} K(x, y) \D(\Delta\bbP)(x)\D(\Delta\bbQ)(y)
    \end{equation}
    where $K(x, y) = \tfrac{1}{2}\left[\langle h_{\delta_{y}}, \delta_{x} \rangle + \langle h_{\delta_{x}}, \delta_{y} \rangle\right] = \tfrac{1}{2}[h_{\delta_{y}}(x) + h_{\delta_{x}}(y)]$ for $x, y \in \cY$.
    By definition, $K$ is symmetric. Due to the integral representation \eqref{eqn:integral-repn}, we can write
    \begin{equation*}
         \int_{\cX} \left[ h_{\bbP'}(y) - h_{\bbP}(y) - \int_{\cX} K(x, y) \D(\Delta\bbP)(x)\right]\D(\Delta\bbQ)(y) = 0
    \end{equation*}
    for every $\bbQ, \bbQ' \in \sP$. By taking $\bbQ = \delta_{y}$ and $\bbQ' = \delta_{y'}$ for $y, y' \in \cX$ with $y'$ fixed, we can see that $h_{\bbP'}(y) - h_{\bbP}(y) - \int_{\cX} K(x, y) \D(\Delta\bbP)(x) = c$ for every $y \in \cX$ some constant $c \in \mathbb{R}$. By fixing $\bbP' \in \sP$, we have
    \begin{equation*}
          h_{\bbP}(y) - \int_{\cX} K(x, y) \,d\bbP(x) = C(y)
    \end{equation*}
    for some constant function $C: \cX \to \mathbb{R}$. It follows that
    \begin{equation*}
        \langle h_{\bbP}, \bbQ \rangle - \int_{\cX}\int_{\cX} K(x, y) \D\bbP(x)\D\bbQ(y) = l(\bbQ)
    \end{equation*}
    for some linear function $l: \sP \to \overline{\bbR}$. Here, a function $l: \sP \to \overline{\bbR}$ is said to be \emph{linear} if $l(\alpha\bbP + (1-\alpha)\bbQ) = \alpha \cdot l(\bbP) + (1-\alpha) \cdot l(\bbQ)$ for every $\alpha \in [0, 1]$ and $\bbP, \bbQ \in \sP$. 
    
    By substituting in (\ref{eqn:symmetry-div}), we get (using symmetry of $K$)
    \begin{align*}
        2[H(\bbQ) - H(\bbP)] &= \langle h_{\bbP} + h_{\bbQ},  \bbQ - \bbP \rangle \\
        &= \langle h_{\bbP},  \bbQ \rangle - \langle h_{\bbP},  \bbP \rangle + \langle h_{\bbQ},  \bbQ \rangle - \langle h_{\bbQ},  \bbP \rangle \\
        &= \int\int K(x, y) \D\bbP(x)\D\bbQ(y) - \int\int K(x, y) \D\bbP(x)\D\bbP(y) \\
        &\quad + \int\int K(x, y) \D\bbQ(x)\D\bbQ(y) - \int\int K(x, y) \D\bbQ(x)\D\bbP(y)\\
        &\quad + l(\bbQ) - l(\bbP) + l(\bbQ) - l(\bbP) \\
        &=  \int\int K(x, y) \D\bbQ(x)\D\bbQ(y) + 2l(\bbQ)  - \int\int K(x, y) \D\bbP(x)\D\bbP(y) - 2l(\bbP).
    \end{align*}
    It follows that
    \begin{equation*}
        H(\bbP) = \frac{1}{2}\int\int K(x, y) \D\bbP(x)\D\bbP(y) + l(\bbP) + C
    \end{equation*}
    for some $C \in \bbR$. The addition of a linear function to an entropy leads to the same score, so we might as well write \begin{equation*}
        H(\bbP) = \frac{1}{2}\int\int K(x, y) \D\bbP(x)\D\bbP(y),
    \end{equation*}
    and the claim is shown.
\end{proof}

\begin{remark}[Why the quadratic score on $\bbR^{d}$ is not a kernel score.]
Assume to the contrary. Then there exists a kernel $k: \bbR^{d} \times \bbR^{d} \to \bbR$ such that $$\int (p(\by) - q(\by))^{2} \D \by = \int k(\by, \by')(p(\by) - q(\by))(p(\by') - q(\by')) \D \by \D \by'$$ for all $p, q$. Since these are quadratic forms in $p - q$, we can use the parallelogram law to derive
$$\int (p_{1}(\by) - q_{1}(\by))(p_{2}(\by) - q_{2}(\by)) \D \by = \int k(\by, \by')(p_{1}(\by) - q_{1}(\by))(p_{2}(\by') - q_{2}(\by')) \D \by \D \by'$$
for any densities $p_{1}, p_{2}, q_{1}, q_{2}$ on $\bbR^{d}$.
Let $A$ be a compact subset of $\bbR^{d}$ and $\{e_{j}\}_{j=1}^{\infty}$ be an orthonormal basis of the space of functions $f: A \to \bbR$ satisfying $\int_{A} f(\by) \D\by = 0$. By taking $$p_{1}(\by) = \frac{\max(e_{i}(\by), 0)}{2\int|e_{i}(\by)|\D\by}, \quad q_{1}(\by) = \frac{\max(-e_{i}(\by), 0)}{2\int|e_{i}(\by)|\D\by}, \quad p_{2}(\by) = \frac{\max(e_{j}(\by), 0)}{2\int|e_{j}(\by)|\D\by}, \quad q_{2}(\by) = \frac{\max(-e_{j}(\by), 0)}{2\int|e_{j}(\by)|\D\by},$$
we derive
$$\delta_{ij} = \int_{A} e_{i}(\by)e_{j}(\by) \D\by = \int_{A \times A} k(\by, \by') e_{i}(\by)e_{j}(\by') \D\by \D\by'$$
for every $1 \leq i, j < \infty$. 
Since $\{e_{i}(\by)e_{j}(\by')\}_{i,j=1}^{\infty}$ form a basis for square-integrable functions on $A \times A$, we have
$$\int_{A \times A} |k(\by, \by')|^{2} \D\by\D\by' = \lim_{N \to \infty} \sum_{i,j =1}^{N} \left[ \int_{A \times A} k(\by, \by') e_{i}(\by)e_{j}(\by') \D\by \D\by' \right]^{2} = \lim_{N \to \infty} N = \infty.$$ By Theorem 16, $k$ can be chosen to be a negative-definite kernel of the form $k(\by, \by') = - \langle \psi_{\by}, \psi_{\by'} \rangle$ which means $|k(\by, \by')|^{2}\leq |k(\by, \by)||k(\by', \by')|$. It follows that the entropy of the uniform density $\mathbf{1}_{A}(\by)/\int_{A}\D\by$ is given by $$ \int_{A}\|\psi_{\by}\|^{2} \frac{\D\by}{\int_{A}\D\by} =  \int_{A} |k(\by, \by)| \frac{\D\by}{\int_{A}\D\by} = \infty.$$ This contradicts the fact that the entropy of the quadratic score is finite for the uniform density. The conclusion follows.
\end{remark}

\begin{proof}[Proof of Theorem \ref{thm:spectral-rep-kernel}]
    According to the Levy-Khinchin Theorem \citep[Theorem 3.12.12]{sasvari2013} (in the special case for Hermitian real-valued $f$), a continuous function $h$ on $\bbR^{d}$ is conditionally negative definite if and only if there exist (a) a scalar $c \in \bbR$, (b) a vector $\mathbf{a} \in \bbR^{d}$, (c) a positive semidefinite matrix $\bB \in \bbR^{d \times d}$, and (d) a $\sigma$-finite Borel measure $\mu$ satisfying $\mu(\{\bzero\}) = 0$, $\mu(A) = \mu(-A)$ for $A \subset \bbR^{d}$ and $\int_{\bbR^{d}} \min(1, \|\mathbf{x}\|^{2}) \D\mu(\mathbf{x}) < \infty$ such that
\begin{equation*}
    h(\mathbf{x}) = c + \langle \mathbf{x}, \bB \mathbf{x} \rangle - \int_{\bbR^{d}} \left[ e^{i\langle \mathbf{x}, \mathbf{u}\rangle} - 1 - \langle \mathbf{x}, \mathbf{u}\rangle \one{\{\|\mathbf{u}\| \leq 1\}} \right] \D\mu(\mathbf{u}).
\end{equation*}
Substituting in $S(\bbP, \bx) = d(\bbP, \delta_{\bx}) + \tfrac{1}{2}h(\bzero)$ and $d(\bbP, \bbQ)$ where $d(\bbP, \bbQ) = -\frac{1}{2} \iint h(\bx - \by) \D(\bbP - \bbQ)(\bx) \D(\bbP - \bbQ)(\by)$ gives the desired result. The special case for negative definite kernel follows from using Bochner's Theorem \citep[Theorem 1.7.4.]{sasvari2013} instead of Levy-Khinchine.
\end{proof}

\begin{proof}[Proof of Theorem \ref{thm:kernel-scores-homogeneous-iso}]
    The measure $\mu$ is supported on $\bbR^{d} \setminus \{\bzero\}$, which we can identify with $(0, \infty) \times \mathbb{S}^{d-1}$ via the transformations $r = \|\bu\| \in (0, \infty)$ and $\sigma = \bu/\|\bu\| \in \mathbb{S}^{d-1}$. By Rohlin's measure disintegration theorem \cite[see][Theorem 6.4]{Simmons2012}, we can write
    \begin{equation*}
        \D\mu(\bu) = \D\nu_{r}(\sigma)\D\rho(r)
    \end{equation*}
    where $\{\nu_{r}\}$ are probability measures on $\mathbb{S}^{d-1}$ for $r \in (0, \infty)$ and $\rho$ is a $\sigma$-finite measure on $(0, \infty)$. In order for $\int_{\bbR^{d}} \min(1, \|\mathbf{x}\|^{2}) \D\mu(\mathbf{x}) < \infty$ to hold, we must have $\int_{0}^{\infty} \min(1, r^{2})\D\rho(r) < \infty$.
    \begin{enumerate}
        \item Suppose that $S(\bbP_{c}, c\by) = c^{\alpha}S(\bbP, \by)$ for every $c > 0$. By Theorem \ref{thm:spectral-rep-kernel}, we have
    \begin{align}
        0 &= (c^{2} - c^{\alpha})(\by - \mathbf{m}_{\bbP})^{\top}\bB (\by - \mathbf{m}_{\bbP}), \label{eqn:lkhin-1}\\ 
        0 &= c^{\alpha}\int_{\bbR^{d}} |e^{i\bu^{\top}\by} - f_{\bbP}(\bu)|^{2} \D\mu(\bu) -\int_{\bbR^{d}} |e^{ic\bu^{\top}\by} - f_{\bbP}(c\bu)|^{2} \D\mu(\bu), \label{eqn:lkhin-2}
    \end{align}
    since the L\`{e}vy-Khinchine decomposition is unique. By doing a change of variables in the second equation, we get
    \begin{equation*}
        \int_{\bbR^{d}} |e^{i\bu^{\top}\by} - f_{\bbP}(\bu)|^{2} \Big[ c^{\alpha}\D\mu(\bu) - \D\mu(\bu/c)\Big]= 0.
    \end{equation*}
   For $\bbP = \delta_{\mathbf{0}}$, this gives (using the fact that $\mu$ is even)
    \begin{multline*}
        \int_{\bbR^{d}} \Big[2 -  e^{i\bu^{\top}\by} - e^{-i\bu^{\top}\by}\Big] \Big[ c^{\alpha}\D\mu(\bu) - \D\mu(\bu/c)\Big]
        \\= 2\int_{\bbR^{d}} \Big[1 -  e^{i\bu^{\top}\by} \Big] \Big[ c^{\alpha}\D\mu(\bu) - \D\mu(\bu/c)\Big]
        = 0.
    \end{multline*}
    Thus,
    \begin{equation*}
        \int_{\bbR^{d}} \Big[1 -  e^{i\bu^{\top}\by} \Big]  c^{\alpha}\D\mu(\bu)
        = \int_{\bbR^{d}} \Big[1 -  e^{i\bu^{\top}\by} \Big] \D\mu(\bu/c).
    \end{equation*}
    Again, using the evenness of $\mu$ we can write this as
    \begin{equation*}
        \int_{\bbR^{d}} \Big[e^{i\bu^{\top}\by} - 1 - \langle \mathbf{y}, \mathbf{u}\rangle \one{\{\|\mathbf{u}\| \leq 1\}} \Big]  c^{\alpha}\D\mu(\bu)
        = \int_{\bbR^{d}} \Big[e^{i\bu^{\top}\by} - 1 - \langle \mathbf{y}, \mathbf{u}\rangle \one{\{\|\mathbf{u}\| \leq 1\}} \Big] \D\mu(\bu/c),
    \end{equation*}
    Since, this is a L\`{e}vy-Khinchine decomposition, we have by uniqueness that the two L\`{e}vy measures must be equal, which gives 
    \begin{equation}\label{eqn:mu-homogeneous}
        \mu(cA) = \mu(A)/c^{\alpha}
    \end{equation}
    for every Borel set $A \subset \bbR^{d}$. 
    From (\ref{eqn:mu-homogeneous}) we get
    \begin{equation*}
        \D\nu_{cr}(\sigma) \D\rho_{c}(r) = \frac{1}{c^{\alpha}} \D\nu_{r}(\sigma) \D\rho(r)
    \end{equation*}
    where $\rho_{c}(A) = \rho(cA)$ for Borel sets $A \subset (0, \infty)$. By integrating over $\sigma \in \mathbb{S}^{d-1}$, we get
    \begin{equation}\label{eqn:integrate-out-sigma}
        \D\rho_{c}(r) = \frac{1}{c^{\alpha}} \D\rho(r)
    \end{equation}
    which implies $\rho((0, cr)) = \frac{1}{c^{\alpha}}\rho((0, r))$ for $c, r > 0$. By fixing $r = 1$ and replacing $c$ with $r$, we get $\rho((0, r)) = \frac{1}{r^{\alpha}}\rho((0, 1))$, which implies that $\rho$ is absolutely continuous with respect to the Lebesgue measure on $(0, \infty)$ and 
    \begin{equation*}
        \D\rho(r) = \smash{\frac{\alpha \rho((0, 1))}{r^{1+\alpha}}} \D r.
    \end{equation*}
    Furthermore, by (\ref{eqn:integrate-out-sigma}), $\D \nu_{cr}(\sigma) = \D\nu_{r}(\sigma)$ which means that $\nu_{r} = \nu$ for $r > 0$, where $\nu = \nu_{1}$. It follows that
    \begin{equation*}
        \D\mu(\bu) = \D\nu(\sigma) \, \frac{\alpha \rho((0, 1))}{r^{1+\alpha}}\, \D r
    \end{equation*}
    where we can write $\rho((0, 1)) = \mu(\mathbb{S}^{d-1}\times (0,1))$. In order for a nonzero measure $\rho$ to satisfy $\int_{0}^{\infty} \min(1, r^{2}) \D\rho(r) < \infty$, we must have $\alpha \in (0, 2)$. \\
    
    From (\ref{eqn:lkhin-1}) and (\ref{eqn:lkhin-2}), we thus have two possibilities: (a) $\alpha = 2$ implying $\alpha \not\in (0, 2)$ and thus, $\mu$ is zero everywhere and (b) $\alpha \neq 2$ implying $\bB = \bzero$ and $\mu$ has the above form with $\alpha \in (0, 2)$ (without ruling out the possibility that it is zero everywhere). 
    
    \item If $S(\bbP_{\bU}, \bU\by) = S(\bbP, \by)$, then 
    \begin{align*}
        0 &= (\by - \mathbf{m}_{\bbP})^{\top}[\bB - \bU^{\top}\bB \bU](\by - \mathbf{m}_{\bbP}), \\
        0 &= \int_{\bbR^{d}} |e^{i\bu^{\top}\by} - f_{\bbP}(\bu)|^{2} \D\mu(\bu) -\int_{\bbR^{d}} |e^{i(\bU^{\top}\bu)^{\top}\by} - f_{\bbP}(\bU^{\top}\bu)|^{2} \D\mu(\bu).
    \end{align*}
    for every rotation $\bU \in \mathrm{SO}(d)$. Arguing as previously, it follows that $\bB = c\mathbf{I}$ for some $c \geq 0$. Moreover, we get that 
    \begin{equation*}
        \D\nu_{r}(\bU\sigma)\D\rho(r) = \D\nu_{r}(\sigma)\D\rho(r).
    \end{equation*}
    Integrating over $r$ reveals that     $\nu_{r}$ is invariant under rotation, that is, $\D\nu_{r}(\bU \sigma) = \D\nu_{r}(\sigma)$, implying $\D\nu_{r}(\sigma) = \Gamma(d/2)\D\sigma/(2\pi^{d/2})$. 
    \end{enumerate}
    The conclusion for energy scores and the CRPS follows from the form of their spectral measures once we combine the above results:
    \begin{equation*}
        \D\mu(\bu) =  \alpha \mu(\mathbb{S}^{d-1}\times (0,1)) \cdot \frac{\Gamma(d/2)}{2\pi^{d/2}} \cdot \frac{\D\sigma \cdot r^{d-1}\D r}{r^{d+\alpha}} \propto \frac{1}{\|\bu\|^{d+\alpha}}\D\bu.
    \end{equation*}
\end{proof}
\end{document}